\documentclass[12pt]{article}
\usepackage{amsmath,amssymb,amsthm}
\usepackage{graphics,epsfig}
\usepackage{hyperref}
\usepackage{natbib}
\usepackage{color}
\usepackage{graphicx}
\usepackage{caption}
\usepackage{subcaption}
\usepackage{float}
\usepackage{dsfont}
\usepackage{mathrsfs}
\usepackage{multirow}
\usepackage{comment}


\oddsidemargin=-0.3in
\evensidemargin=-0.3in
\textwidth=7.2in
\headheight=0pt
\headsep=0pt
\topmargin=0in
\textheight=8.6in


\def \ra {\rightarrow}

\def \E {\mathbb{E}}

\def \a {\alpha}
\def \be {\beta}

\newtheorem{definition}{\bf Definition}
\newtheorem{defn}[definition]{\bf Definition}

	\newtheorem{remark}{\bf Remark}
	\newtheorem{rmk}[remark]{\bf Remark}

	\newtheorem{theorem}{\bf Theorem}
	
	\newtheorem{prop}{\bf Proposition}
	\newtheorem{lem}[theorem]{\bf Lemma}
	\newtheorem{as}{\bf Assumption}
	\newtheorem{ax}{\bf Axiom}

	\setcounter{theorem}{0}

\makeatletter
\setlength{\arraycolsep}{2\p@} 
\makeatother



\begin{document}

  \title{\bfseries Semicooperation under curved strategy spacetime}

  \author{
  	Paramahansa Pramanik\footnote{e-mails of authors: {\small\texttt{ppramanik1@niu.edu}},\ {\small\texttt{polansky@niu.edu}}}\; \footnote{Department of Mathematical Sciences, Northern Illinois University, DeKalb, IL, 60115,
  		United States.}\; \;  
  	Alan M. Polansky \footnote{Department of Statistics and Actuarial Science, Northern Illinois University, DeKalb, IL, 60115,
  		United States.}
  }

\date{\today}
\maketitle


\subparagraph{Abstract.} Mutually beneficial cooperation is a common part of economic systems as firms in partial cooperation with others can often make a higher sustainable profit. Though cooperative games were popular in $1950$s, recent interest in non-cooperative games is prevalent despite the fact that cooperative bargaining seems to be more useful in economic and political applications. In this paper we assume that the strategy space and time are inseparable with respect to a contract. Under this assumption we show that the strategy spacetime is a dynamic curved Liouville-like $2$-brane quantum gravity surface under asymmetric information and that traditional Euclidean geometry fails to give a proper feedback Nash equilibrium. Cooperation occurs when two firms' strategies fall into each other's influence curvature in this strategy spacetime. Small firms in an economy dominated by large firms are subject to the infuence of large firms. We determine an optimal feedback semicooperation of the small firm in this case using a Liouville-Feynman path integral method.

\subparagraph{Key words:}
semicooperative games; curved strategy spacetime; curvature tensor; $2$-brane; stochastic differential equation; Liouville-Feynman-like path integral. 


\section{Introduction}
In this paper we discuss a generalized concept of cooperation by assuming that all firms behave rationally under asymmetric information. Their economic influence creates curvature in strategy spacetime. Furthermore, we assume the information set is incomplete and imperfect. By rationality, a firm uses all currently available information and resources to make its decisions. Under incomplete and imperfect information, two firms do not have any prior knowledge about each other and make guesses based on the available information in economy. Every firm has its own dynamic strategy to do business. Therefore, there exists a polygonal curved strategy space for each firm formed by the strategies taken by it historically. In this paper we include time as an important component of this curved strategy space and we define a curved strategy spacetime. 

\medskip

Time is an important aspect of strategy. Decisions made at one time-point may affect the number and type of strategies at a later time. This implies that the shape of the strategy set is time dependent. Alternatively, if the market environments are stable then a firm might not need to cooperate with a new firm and the strategy may be stable. This behavior depends on the available information to the firm. If information is perfect and complete, then keeping the same strategy is sustainable. However, under asymmetric information a stable strategy is not possible and the shape of the strategy polygon is probabilistic in time. 

\medskip
 
Our main focus on this paper is semi-cooperative games. The main reason for the popularity of cooperative games arises from its power to rule out externalities \cite{maskin2016}. The smallest cooperation does consider the externalities which is the heart of the bargaining power. We also assume that a big existing firm in an economy creates a curvature around itself by showing off more market or political power or doing commercials. Because of the presence of asymmetric information a small firm has inexact knowledge of the big firm. Therefore, an existing big firm's strategy should to create a higher degree of curvature in the strategy spacetime to attract more fringes and attain higher profit. On the other hand, if there is existence of multiple big firms, a new fringe finds out multiple curvatures and breaks apart into small pieces in terms of the share of its value and creates semi-cooperation like firm A's cooperation with firms B,C, and D. If two firms have same market power then, in the strategy spacetime when they come closer, they feel curvature in each other's strategy and form a cooperation. From Maskin we know, in the literature cooperative games particularly in Shapley value \cite{shapley1953}, only the resulting payoffs leads to a coalition instead of individual strategies. In this paper show that individual strategies create curvature in strategy spacetime, and this curvature can lead to a coalition.
 
 \section{Bounded rationality in curved strategy spacetime}

Bounded rationality implies that cognitivity and time limitations make a firm unable to make rational decisions (\cite{gigerenzer2002},\cite{simon1955}). A common assumption in economics for constrained optimization is that a firm has Laplacean superintelligence and has unlimited resources \cite{gigerenzer2002}. If we incorporate asymmetric information then constrained optimization under rationality yields us to a different set of  results. Even under perfect and complete information, an individual might not make rational decisions (\cite{allais2008}, \cite{machina1987}).

\medskip

To motivate the mathematical development that follows, consider  an  incumbent firm A with a large market share that behaves rationally, has a sustainable profit, and has sufficient economic influence such that a curvature in the strategy spacetime exists in the neighborhood of the firm's strategy.  Suppose that a firm Bwith a much smaller market share considers options in the strategy space. These options correspond to a geodesic polygon in the strategy space, with the purpose of finding options to obtain an optimal sustainable profit or to move in the direction of bounded rationality. Firm B decides according to its bounded rationality and finds that collusion with Firm A provides a higher profit potential and Firm B falls in Firm A's implied curvature in the strategy spacetime. Therefore, Firm B cannot distinguish between its bounded rationality and the curvature created by the market power and rationality of Firm A. Firm B finds both of them are equivalent. Furthermore, under the existence of multiple big firms, Firm B may consider coalitions with more than one firm may be more profitable. In this case Firm B is influenced by multiple curvatures created by multiple big firms. We define this type of cooperation as semi-cooperation. Sometimes incumbent firms cooperate to block other firms from entering an economy. Under asymmetric information the new firm cannot see difference between the peak in the strategy spacetime and the reduction of profit caused by internal cooperation of the incumbent firms. Hence, bounded rationality and curvature of strategy space are equivalent to each other. If we add the time dimension, the strategy polygons may change their shapes based on whether the players cooperate or not. As the strategy of Firm B enters the region where Firm A has influence it must either cooperate with Firm A or leave the economy.

\medskip

Suppose that $\mathcal{M}$ is an $n$-dimensional manifold such that it is Hausdorff, locally Euclidean of dimension $n$, and has a countable basis of open strategy sets from where a firm can choose a strategy polygon \cite{boothby1986}. Throughout this paper we assume $n=3$ where the two horizontal axes represent two real dimensions of the strategy spacetime and the vertical dimension is imaginary time, making $\mathcal{M}$ complex with the signature $(-,+,+)$. On this complex manifold we create a smooth $C^\infty$ structure  and we allow firms to come and interact with each other \cite{boothby1986}. 

\medskip

Each firm has a measure of market share $x(s)\in D$ at time $s\in[0,t]$ which creates a curvature in the strategy spacetime where $D$ is the domain in this complex manifold $\mathcal{M}$. Under non-cooperation the strategy cannot go beyond the strategy-set. As the firm creates a curve, the strategy set becomes a complex geodesic polygon. Following  \cite{danielsen1989} and \cite{sjoberg2006} we can write in $\mathcal{M}$ the geodesic line $\overrightarrow{PQ}$ connecting two points $P$ and $Q$ with latitudes, longitudes and azimuthals at time $s$ are $\theta_1(s),\ \theta_2(s)$, $\rho_1(s), \ \rho_2(s),\ \tau_1(s),$ and $\tau_2(s)$ respectively. A geodesic polygon is created from its equator of its complex ellipsoid with authalic radius $r(s)$ as 
\begin{align}\label{pol1}
\mathcal{A}(s)&= r^2(s)\ [\tau_2(s)-\tau_1(s)]+\int_{\rho_1}^{\rho_2}\int_{\theta_1}^{\theta_2}\left[\frac{1}{k(s)}-r^2(s)\right] \cos \theta(s)\ d\theta(s)\ d\rho(s),
\end{align}
where $k(s)$ is the Gaussian curvature created in the field and, $d\theta(s)$ and $ d\rho(s)$ are change in $\theta$ and $\rho$. If one plane has more than one type of curvature, then there are different ellipsoids corresponding to different curvatures whose $r,\tau_1,\tau_2,\theta_1,\theta_2,\rho_1$, and $\rho_2$ are different. If we add small arcs corresponding to each ellipsoid of each curvature we will get a total $\mathcal{A}_0(s)$, which is the area created by $\overrightarrow{PQ}$. If the strategy polygon is made of four geodesics $\overrightarrow{PQ}$, $\overrightarrow{QR}$, $\overrightarrow{RS}$ and $\overrightarrow{SP}$, the area of the strategy space can be calculated by the adding or subtracting the areas created by each of the geodesic lines, based on the locations of the equator of the complex ellipsoid.

\medskip

\begin{figure}
	\centering
	\includegraphics[width=15cm]{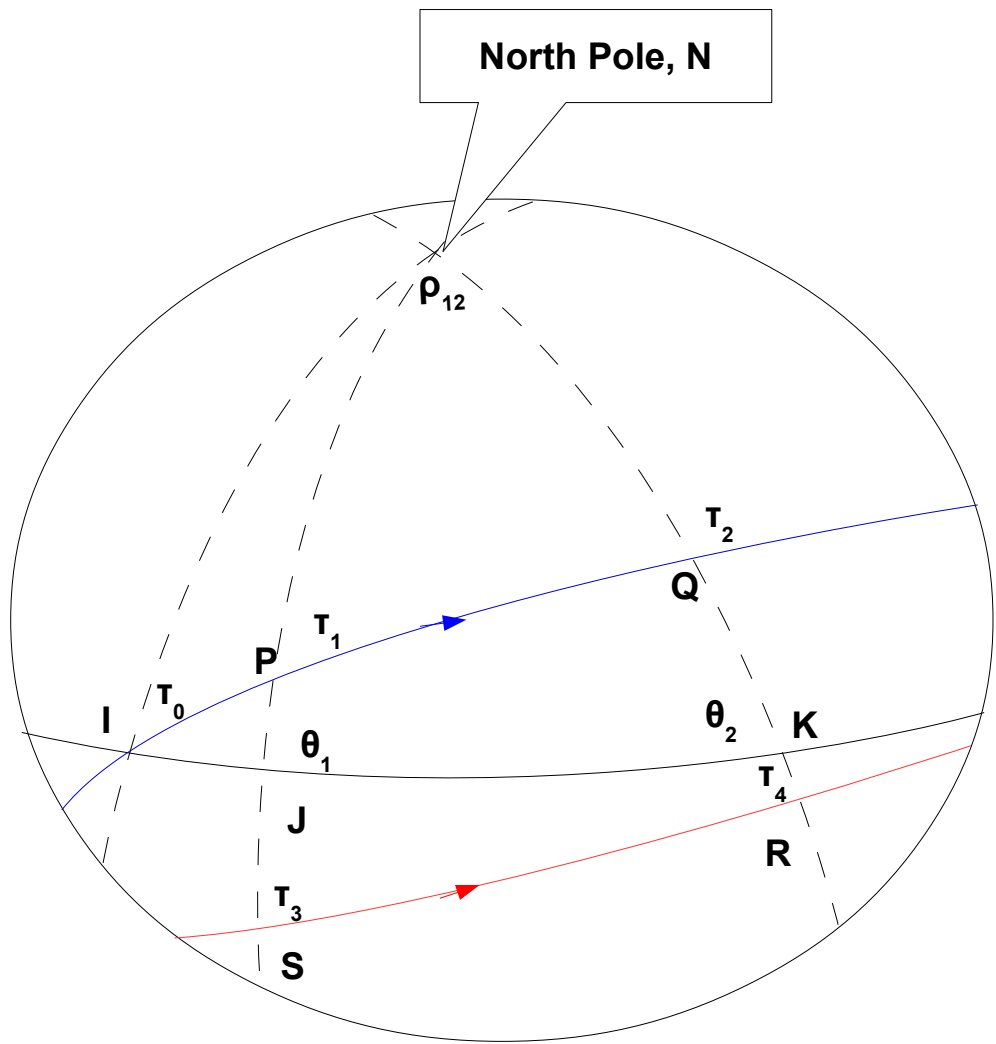}
	\caption{Geodesic strategy polygon $\square PQRS$ on an ellipsoid.}
	\label{p3}
\end{figure}

\medskip

Assume a firm has four strategies and its strategy polygon $\square PQRS$ has four sides as shown in Figure \ref{p3}. The lines $\overrightarrow{PQ}$ and $\overrightarrow{SR}$  are two geodesic lines on a ellipsoid with radius $r(s)$,  $\rho_{12}(s)=\rho_1(s)-\rho_2(s)$ is a longitudinal angle, $\tau_0(s),\tau_1(s),\tau_2(s),\tau_3(s)$, and $\tau_4(s)$ are azimuthal angles of $\overrightarrow{PQ}$ and $\overrightarrow{SR}$ respectively, and $\theta_1(s)$ and $\theta_2(s)$ are two latitudes on the equator $\overrightarrow{IJK}$. Using Equation (\ref{pol1}) we can calculate the area under $\overrightarrow{PQ}$ and the equator which is  $\square PQKJ$ and area over $\overrightarrow{SR}$ and the equator $\square JKRS$. Addition of these two areas is the total area of $\square PQRS$. On the other hand, if both $\overrightarrow{PQ}$ and $\overrightarrow{SR}$ are above the Equator $\overrightarrow{IJK}$ then $\square PQRS=\square PQKJ-\square SRKJ$. As the strategy set is always a convex, closed polygon with equal sides, we can calculate the area by adding or subtracting all the areas created by the geodesic lines forming the polygon.

\medskip

Let us define an indicator function $\mathds{1}_{ai}(s)$, where $a$ is the total number of equal sides of firm $i$'s strategy polygon at time $s$ such that,
\begin{equation}\label{ind}
\mathds{1}_{ai}(s)=\begin{cases} 1 & \text{if the geodesics are on the same side of the equator,}\\ 0    & \text{otherwise.}
\end{cases}
\end{equation}
Therefore, the area of the $i^{th}$ firm's closed, convex strategy polygon, is 
\begin{align}\label{ind0}
\mathcal{A}_i(s)&=\big[1-\mathds{1}_{ai}(s)\big] \sum_{a=1}^l\ \mathcal{A}_i^a(s)+\mathds{1}_{ai}(s) \left[\sum_{a=1}^{l_1} \mathcal{A}_i^a(s)- \sum_{a=1}^{l_2} \mathcal{A}_i^a(s)\right],
\end{align}
such that, $l_1+l_2=l$. In Equation (\ref{ind0}) $\mathcal{A}_i^a(s)$ is $i^{th}$ firm's area with the equator of the ellipsoid where $a$ is the total number of sides of the polygon at time $s$.

\medskip 

In a non-cooperative game firm $i$ has one strategy polygon $\mathcal{A}_i(s)$ and its strategy can not move beyond it. Contrarily, under semi-cooperation  one firm interacts with multiple other firms and has multiple  geodesic  polygons at different times. Therefore, at $s$ the firm can choose any strategy polygon with probability $\alpha_{1i}(s)\in[0,1]$ and its strategy $u_i(s)$ lies inside $\a_{1i}(s)\ \mathcal{A}_i(s)$. If there are $n$ firms in the economy, then under incomplete and imperfect information firm $i$ sees a portion $\a_{2j}(s)\in[0,1]$ of the strategy space of firm $j$. Therefore, firm $i$'s strategy depends on the interactions with its own $\a_{1i}^{\hat{\rho}}(s)\ \mathcal{A}_{i}(s)$ and $\sum_{j=1}^{n-1}\ \a_{2j}^{\tilde{\rho}}(s)\ \mathcal{A}_{j}(s)$, where $\a_{1i}^{\hat{\rho}}(s)$ is firm $i$'s probability to choose its strategy polygon $\mathcal{A}_{i}$ with the degree of cooperation ${\hat{\rho}}$ at time $s$ and $\a_{2j}^{\tilde{\rho}}(s)$ is firm $i$'s visualization of firm $j$'s strategy polygon at the same time with its realization of degree of cooperation $\tilde{\rho}$ where $(\hat{\rho},\tilde{\rho})\in(0,1]^2$. In other words, $\a_{1i}^{\hat{\rho}}(s)$ and  $\a_{2j}^{\tilde{\rho}}(s)$  capture the semi-cooperation and bounded rationality of firm $i$.

\medskip

Suppose that $p$ is a point in $\a_{1i}^{\hat{\rho}}(s)\mathcal{A}_{i}(s)$ which represents firm $i$'s behavior in its complex strategy polygon.

\begin{defn}(Tangent vector) Let $\Theta(\mathcal{M})_p$ be a local strategy ring at point $p$ on the $C^\infty$ complex manifold $\mathcal{M}$, then a tangent vector $V_p$ is a directional derivative at $p$ which is a linear map $V:\Theta(\mathcal{M})_p\ra\mathbb{R}$ that obeys Leibniz rule $V(f_1f_2)=V(f_1)f_2(p)+f_1(p)V(f_2)$ for two homeomorphic functions $f_1,f_2\in\Theta(\mathcal{M})_p$. 
\end{defn}

 We do not need any shape of manifold to define the directional derivative as long as the space is Hausdorff. Now we show the existence of curvature around the strategy of a firm with positive market share by considering the cotangent space built on gradients and a $1$-forms basis. The gradients and regular basis vectors are distinct in the sense that a basis is associated with the coordinate lines, but the gradients are associated with lines of steepest descent from one surface to another. Hence, gradients can be used to measure curvature in the strategy spacetime. Furthermore, we use the fact that the Lie bracket of two gradient vectors is non-zero when the space is curved. 
 
 \medskip
 
 Suppose that a firm $i$ in the strategy space with of market share $x_i(s)\in X$. Let $\boldsymbol{f}_i[s,x_i(s)]$ be a vector in $\mathcal{M}$ with gradients $ \boldsymbol{\nabla_V}=(v_\mu+b_\mu)\ \partial^\mu$ and $\boldsymbol{\nabla_U}=(u_\nu+w_\nu)\ \partial^\nu$ where $v_\mu,\ u_\nu$ are the drift parts of the gradients, and $b_\mu,\ w_\nu$ are the Brownian motion processes. Suppose that time is fixed and the vector $\boldsymbol{f}_i$ is moving along the strategy space, starting far from the strategy of the firm and moving closer to it. Initially the covariant components of the vector do not change the direction, but when the vectors approaches the firm, the direction of gradient vector changes. As we choose two different gradients the Lie bracket will never be zero around the firm.

\begin{prop}
Let $\boldsymbol{f}_i[s,x_i(s)]$ be the $i^{th}$ firm's vector on a complex Brownian strategy spacetime $\mathcal{M}$ with a measure of market share $x_i\in X$ at fixed time $s$. The two gradient vectors  $\boldsymbol{\nabla_V}$ and $\boldsymbol{\nabla_U}$ around the firm's strategy are $\boldsymbol{\nabla_V}=(v_\mu+b_\mu)\ \partial^\mu$, and $\boldsymbol{\nabla_U}=(u_\nu+w_\nu)\ \partial^\nu,$
where $v_\mu,\ u_\nu$ are drift components, $b_\mu,\ w_\nu$ are the Brownian motion covariant vector components, and $\partial^\mu,\ \partial^\nu$ are contravariant basis vectors of $1$-forms. Then firm $i$ creates a curvature around itself in $\mathcal{M}$.
\end{prop}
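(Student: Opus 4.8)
The plan is to read off the curvature from the failure of the metric (Levi--Civita) connection on $\mathcal{M}$ to be flat, which is encoded in the non-commutativity of covariant differentiation. Concretely, for the contravariant basis of $1$-forms $\partial^\mu,\partial^\nu$ the target identity is
\[
[\nabla^\mu,\nabla^\nu]\,f_i^{\lambda}=R^{\lambda}{}_{\sigma}{}^{\mu\nu}\,f_i^{\sigma},
\]
so that exhibiting a single nonzero component of the commutator $[\nabla^\mu,\nabla^\nu]\boldsymbol{f}_i$ in a neighborhood of the firm's strategy produces a nonzero component of the Riemann tensor and hence establishes that the firm creates curvature. The companion identity $R(\boldsymbol{\nabla_U},\boldsymbol{\nabla_V})\boldsymbol{f}_i=\nabla_{\boldsymbol{\nabla_U}}\nabla_{\boldsymbol{\nabla_V}}\boldsymbol{f}_i-\nabla_{\boldsymbol{\nabla_V}}\nabla_{\boldsymbol{\nabla_U}}\boldsymbol{f}_i-\nabla_{[\boldsymbol{\nabla_V},\boldsymbol{\nabla_U}]}\boldsymbol{f}_i$ is what ties the Lie bracket discussed before the statement to the curvature.

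First I would insert the two gradient fields $\boldsymbol{\nabla_V}=(v_\mu+b_\mu)\,\partial^\mu$ and $\boldsymbol{\nabla_U}=(u_\nu+w_\nu)\,\partial^\nu$ and compute the Lie bracket componentwise,
\[
[\boldsymbol{\nabla_V},\boldsymbol{\nabla_U}]^{\lambda}=(v_\mu+b_\mu)\,\partial^{\mu}(u_\lambda+w_\lambda)-(u_\nu+w_\nu)\,\partial^{\nu}(v_\lambda+b_\lambda),
\]
splitting the right-hand side into a deterministic drift--drift part, two mixed drift--Brownian parts, and a Brownian--Brownian part. Far from the firm the covariant components are, by the hypothesis recorded before the statement, effectively constant, so every derivative vanishes and the bracket is zero; this matches the observation that a vector approaching from a distance does not change direction. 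As the field enters the firm's neighborhood the Brownian components $b_\mu,w_\nu$ become active, and the mixed and Brownian--Brownian terms fail to cancel, giving $[\boldsymbol{\nabla_V},\boldsymbol{\nabla_U}]\neq 0$ there.

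I would then convert this nonvanishing bracket into nonvanishing curvature. Rather than asserting ``nonzero bracket $\Rightarrow$ curvature'' (which is false for generic fields on flat space), I would compute the honest commutator $\nabla_{\boldsymbol{\nabla_U}}\nabla_{\boldsymbol{\nabla_V}}\boldsymbol{f}_i-\nabla_{\boldsymbol{\nabla_V}}\nabla_{\boldsymbol{\nabla_U}}\boldsymbol{f}_i$, subtract $\nabla_{[\boldsymbol{\nabla_V},\boldsymbol{\nabla_U}]}\boldsymbol{f}_i$, and identify the remainder with $R(\boldsymbol{\nabla_U},\boldsymbol{\nabla_V})\boldsymbol{f}_i$. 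Equivalently, I would parallel-transport $\boldsymbol{f}_i$ around a small geodesic loop encircling the firm's strategy and read the holonomy defect at leading order as a Riemann component. The Brownian components contribute a second-order quadratic-variation (It\^o) term along the loop, so the transported vector fails to return to itself, which is exactly a nonzero curvature.

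The main obstacle is precisely this last step: a nonzero Lie bracket detects non-integrability of the two flows, not holonomy, and so does not by itself force $R\neq 0$. The crux will be to show that the It\^o correction generated by $b_\mu$ and $w_\nu$ genuinely survives inside the commutator of covariant derivatives (equivalently, inside the loop holonomy) and cannot be removed by a change of frame near the firm. I would make this rigorous by fixing one explicit index pair $(\mu,\nu)$ and one output component $\lambda$ for which $[\nabla^\mu,\nabla^\nu]f_i^{\lambda}$ is demonstrably nonzero throughout the neighborhood, thereby exhibiting an explicit nonzero entry $R^{\lambda}{}_{\sigma}{}^{\mu\nu}$ and completing the argument.
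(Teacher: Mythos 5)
Your first half reproduces the paper's own argument: the proof in the paper (Equations (\ref{g23})--(\ref{g26})) is exactly the four-term expansion of the Lie bracket $[\boldsymbol{\nabla_V},\boldsymbol{\nabla_U}]\boldsymbol{f}$ into drift--drift, mixed, and Brownian--Brownian pieces, followed by the assertion that the bracket is nonzero because all the covariant components differ, and the immediate conclusion that curvature exists. You are right to flag that this last inference is invalid --- two generic vector fields on flat $\mathbb{R}^n$ have a nonzero Lie bracket --- and that is where you diverge from the paper. But the repair you propose is never carried out: you write that ``the crux will be to show that the It\^o correction \ldots genuinely survives inside the commutator of covariant derivatives,'' which is precisely the content of the proposition, and no computation establishing it is given. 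As written, the proposal is a plan with the decisive step missing.

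Moreover, the proposed route cannot close the gap. The curvature operator $R(\boldsymbol{\nabla_U},\boldsymbol{\nabla_V})\boldsymbol{f}_i=\nabla_{\boldsymbol{\nabla_U}}\nabla_{\boldsymbol{\nabla_V}}\boldsymbol{f}_i-\nabla_{\boldsymbol{\nabla_V}}\nabla_{\boldsymbol{\nabla_U}}\boldsymbol{f}_i-\nabla_{[\boldsymbol{\nabla_U},\boldsymbol{\nabla_V}]}\boldsymbol{f}_i$ is $C^\infty$-linear (tensorial) in all three slots, so its value at a point depends only on the connection and the pointwise values of the arguments, not on how irregularly $b_\mu$ and $w_\nu$ vary nearby. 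If the underlying metric on $\mathcal{M}$ is flat, this expression vanishes identically no matter which stochastic fields you insert; the It\^o/quadratic-variation contributions you hope to exhibit are exactly what the $-\nabla_{[\cdot,\cdot]}$ subtraction removes, and the same cancellation shows up as trivial holonomy around your small loop. Nonflatness has to be injected through the metric itself (as the paper later does with the ansatz $N_{\theta\nu}(U)=e^{\gamma b(z)}G_{\theta\nu}(U)+\eta_{\theta\nu}(U)$ tying the geometry to the firm's presence), and neither the paper's proof nor your proposal makes that link inside this argument. A genuine proof would need to show that a positive market share forces a non-flat metric and then exhibit a nonzero Riemann component of \emph{that} metric --- a different argument from anything sketched here.
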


\begin{proof}
Suppose $\boldsymbol{f}_i[s,x_i(s)]$ is a vector representing the movement of the strategy of the $i^{th}$ firm at time $s$ with its measure of market share $x_i(s)\in X$ in a cotangent space $\mathcal{M}$. Since, $x_i(s)$ is a parameter of $\boldsymbol{f}_i$, we define another vector in the cotangent space $\boldsymbol{g}_i$ such that,
\begin{align}\label{g20}
\boldsymbol{g}_i(x_i) &= \boldsymbol{f}_i[y_\mu(x_i)],
\end{align}
where $y_\mu$ is a transformation of the coordinate system $y^\mu$ such that $y^\mu\rightarrow y_\mu$ with $\mu=1,2,3$, where the superscripts represent different axes of the same coordinate system. After differentiating both sides of Equation (\ref{g20}) with respect to $x_i$ we get,
\begin{align}
\frac{d\boldsymbol{g}_i}{dx_i}&=\sum_{\mu=1}^{3}\ \frac{dy_\mu}{dx_i}\frac{\partial \boldsymbol{f}_i}{\partial y_\mu}=\frac{dy_\mu}{dx_i}\frac{\partial \boldsymbol{f}_i}{\partial y_\mu},
\end{align}
using Einstein summation convention.

In Equation (\ref{g20}) $d\boldsymbol{g}_i/dx_i$ is cotangent or contravariant vector, $dy^\mu/dx_i$ is covariant components of the vector in the direction of the gradient vector of  $\boldsymbol{f}_i$, and $\partial \boldsymbol{f}_i/\partial y^\mu$ is the contravariant basis. Therefore, Equation (\ref{g20}) can be written as $\boldsymbol{\nabla_V}_i=v_\mu \partial^\mu \boldsymbol{f}_i$. As we are only considering only one firm, we ignore subscript $i$ and write down the gradient vector representation of Equation (\ref{g20}) as $\boldsymbol{\nabla_V}=v_\mu \partial^\mu \boldsymbol{f}$ where $\boldsymbol{\nabla_V}=d\boldsymbol{g}_i/dx_i$, $v_\mu=dy_\mu/dx_i$ and $\partial^\mu \boldsymbol{f}=\partial \boldsymbol{f}_i/\partial y_\mu$. As we have Brownian strategy spacetime, the vectors in this field are differentiable in the direction of the strategy space but non-holomorphic everywhere. We know that for any given manifold the basis vector does not change, only the covariant vector component $v_\mu$ changes. 

Suppose there are two gradients $\boldsymbol{\nabla_V}$ and $\boldsymbol{\nabla_U}$ on Brownian strategy surface such that $\boldsymbol{\nabla_V}=(v_\mu+b_\mu)\ \partial^\mu$, and $\boldsymbol{\nabla_U}=(u_\nu+w_\nu)\ \partial^\nu,$ where $v_\mu,\ u_\nu$ are drift components and $b_\mu,\ w_\nu$ are the Brownian motion components of gradients $\boldsymbol{\nabla_V}$ and $\boldsymbol{\nabla_U}$ respectively are differentiable with respect to market share $x$. Finally, $\partial^\nu$ is the contravariant basis vector with respect to the direction of  $\boldsymbol{\nabla_U}$.

Therefore, the Lie bracket or commutator of the two gradients of $\boldsymbol{f}$ can be expressed as
\begin{align}\label{g23}
[\boldsymbol{\nabla_V},\boldsymbol{\nabla_U}]\boldsymbol{f}&=[v_\mu\ \partial^\mu,\ u_\nu\ \partial^\nu]\boldsymbol{f}+[v_\mu\ \partial^\mu,\ w_\nu\ \partial^\nu]\boldsymbol{f}+[b_\mu\ \partial^\mu,\ u_\nu\ \partial^\nu]\boldsymbol{f}+[b_\mu\ \partial^\mu,\ w_\nu\ \partial^\nu]\boldsymbol{f}.
\end{align}
We will calculate the components for each of the four Lie brackets using the contraction property of tensors. First Lie bracket of the right hand side of Equation (\ref{g23}) is 
\begin{align}\label{g24}
[v_\mu\ \partial^\mu,\ u_\nu\ \partial^\nu]\boldsymbol{f}&=v_\mu\ \partial^\mu[u_\nu\ \partial^\nu\boldsymbol{f}]-u_\nu\ \partial^\nu[v_\mu\ \partial^\mu\boldsymbol{f}]\notag\\&=v_\mu\ \partial^\mu[u_\nu\ \partial^\nu\boldsymbol{f}]-u_\mu\ \partial^\mu[v_\nu\ \partial^\mu\boldsymbol{f}],\notag\\&=[v_\mu\ \partial^\mu u_\nu-u_\mu\ \partial^\mu v_\nu]\ \partial^\nu\boldsymbol{f}.
\end{align}
Similarly, 
\begin{align}\label{g25}
[v_\mu\ \partial^\mu,\ w_\nu\ \partial^\nu]\boldsymbol{f}&=[v_\mu\ \partial^\mu w_\nu-w_\mu\ \partial^\mu v_\nu]\ \partial^\nu\boldsymbol{f},\notag\\ [b_\mu\ \partial^\mu,\ u_\nu\ \partial^\nu]\boldsymbol{f}&=[b_\mu\ \partial^\mu u_\nu-u_\mu\ \partial^\mu b_\nu]\ \partial^\nu\boldsymbol{f},
\end{align}
and
\begin{align}\label{g25.0}
 [b_\mu\ \partial^\mu,\ w_\nu\ \partial^\nu]\boldsymbol{f}&=[b_\mu\ \partial^\mu w_\nu-w_\mu\ \partial^\mu b_\nu]\ \partial^\nu\boldsymbol{f}.
\end{align}

Equations (\ref{g24})- (\ref{g25.0}) imply,
\begin{align}\label{g26}
[\boldsymbol{\nabla_V},\boldsymbol{\nabla_U}]\boldsymbol{f}&=\biggr[\left(v_\mu\frac{\partial u_\nu}{\partial y_\mu}-u_\mu\frac{\partial v_\nu}{\partial y_\mu}\right)+\left(v_\mu\frac{\partial w_\nu}{\partial y_\mu}-w_\mu\frac{\partial v_\nu}{\partial y_\mu}\right)\notag\\&\hspace{1cm}+\left(b_\mu\frac{\partial u_\nu}{\partial y_\mu}-u_\mu\frac{\partial b_\nu}{\partial y_\mu}\right)+\left(b_\mu\frac{\partial w_\nu}{\partial y_\mu}-w_\mu\frac{\partial b_\nu}{\partial y_\mu}\right)\biggr]\ \partial^\nu\boldsymbol{f}\neq 0,
\end{align}
as all the covariant components of $\boldsymbol{\nabla_V}$ and $\boldsymbol{\nabla_U}$ are different. Hence, a firm with positive market share creates a curvature around its strategy in the strategy space.
\end{proof}

\begin{remark}
Only the rationality of a firm can create a positive measure of market share and hence a curvature. This type of argument can be shown rigorously with a more generalized curved strategy spacetime which is asymptotically Minkowski using an argument similar to the Riemann-Penrose conjecture (\cite{schoen1979}, \cite{witten1981}).
\end{remark}

\section{Construction of the field of strategy spacetime}

\medskip
Represent the dynamic stochastic discounted profit function of the $i^{th}$ firm as $\pi_i[s,x_i(s),u_i[\a_{1i}^{\hat{\rho}}(s) \mathcal{A}_i(s)]]$, where $x_i$ and $u_i[\a_{1i}^{\hat{\rho}}(s) \mathcal{A}_i(s)]$ are firm $i$'s measure of market share and strategy at time $s$ respectively, where firm $i$'s strategy is a monotonic function of its $\a_{1i}^{\hat{\rho}}(s) \mathcal{A}_i(s)$. It is quadratic in terms of change in time or $(d/ds)^2$, non-decreasing in output price, non-increasing in input price, homogenous of degree one in output and input prices, convex and continuous in output and input prices. 

\begin{definition}\label{d2}
(Schwartz space) Suppose for $i=1,2,...,k,$ $\pi_{i1}$ is a $C^{\infty}$ function over $\mathbb{R}^{n\times k}$, then define
\begin{align}
\mathfrak{S}(\mathbb{R}^{n\times k})&=\{\pi_{i1}\in C^\infty(\mathbb{R}^{n\times k}):\ ||\pi_{i1}||_{\a,\be}<\infty,\ \forall\ \a,\be\in\mathbb{N}^{n\times k}\ \text{and,}\ i=1,2,...,k.\},\notag
\end{align}
where $\a,\ \be$ are multi-indices, $C^\infty(\mathbb{R}^{n\times k})$ is a set of smooth functions from $\mathbb{R}^{n\times k}$ to the complex space $\mathbb{C}$ (i.e. $C^\infty(\mathbb{R}^{n\times k}):\mathbb{R}^{n\times k}\ra \mathbb{C}$) and for an element $r$ in  $\mathbb{R}^{n\times k}$ 
\begin{align}
||\pi_{i1}||_{\a,\be}&:=\sup_{r\in\mathbb{R}^{n\times k}}\ |r^\a\ D^\be\ \pi_{i1}(r)|,\notag
\end{align}
where $D^\be\ \pi_{i1}(r)$ is $\be^{th}$ order derivative of $\pi_{i1}(r)$ and $r=\{i,x,u\}$.
\end{definition}

\medskip

\begin{rmk}\label{r0}
In our case, as time is imaginary, the real dimensions of the strategy spacetime is $2$, as $\pi_{i1}\in \mathfrak{S}(\mathbb{R}^{n\times k})$ it is a test function. 
\end{rmk}

Based on Definition \ref{d2} we assume there exists a $C^\infty$ function $\pi_{i1}$ which can replace the stochastic profit $\pi_i$ by It\^o's Theorem is in the Schwartz space.

\begin{defn}\label{d3}
(Distribution) For a Schwartz space $\mathfrak{S}(\mathbb{R}^{n\times k})$ and for all $a,m\in\mathbb{N}$ if the function $\pi_{i1}:\mathbb{R}^{n\times k}\ra \mathbb{C}$ satisfies  continuity with respect to all the semi-norms,
\begin{align}
|\pi_{i1}|_{a,m}:=\ \sup_{|\be|\leq a}\ \sup_{r\in\mathbb{R}^{n\times k}}\  |\partial^\be\pi_{i1}(r)|\ (1+|r|^2)^m<\infty,\notag 
\end{align}	
then $\pi_{i1}$ is called a distribution. 
\end{defn}

\begin{defn}\label{d4}
(Tempered distribution)  For a Schwartz space $\mathfrak{S}(\mathbb{R}^{n\times k})$ and for all $a,m\in\mathbb{N}$ if the linear function $\mathcal{T}:\mathfrak{S}(\mathbb{R}^{n\times k})\ra \mathbb{C}$ satisfies  continuity with respect to all the semi-norms,
\begin{align}
|\pi_{i1}|_{a,m}:=\ \sup_{|\be|\leq a} \ \sup_{r\in\mathbb{R}^{n\times k}}\  |\partial^\be\pi_{i1}(r)|\ (1+|r|^2)^m<\infty,\notag 
\end{align}	
then $\mathcal{T}$ is called a tempered distribution.
\end{defn}

\begin{defn}\label{d5}
(Operator valued distribution) Assume our strategy spacetime $\mathcal{S}$ is a three dimensional topological vector space on a smooth manifold  $\mathcal{M}$. An operator valued distribution on $\mathcal{M}$ is a continuous linear functional $\Psi$ such that $\Psi:C_c^\infty(\mathcal{M})\ra \mathcal{S}$, where $C_c^\infty(\mathcal{M})$ is the space of smooth bump functions with compact support.
\end{defn}

\begin{rmk}\label{r1}
If we replace $C_c^\infty(\mathcal{M})$ by $C^\infty(\mathcal{M})$ then Definition \ref{d5} becomes the definition of a compactly supported distribution. Replacing $C_c^\infty(\mathcal{M})$ by the Schwartz space $\mathfrak{S}(\mathcal{M})$ connects  Definitions \ref{d4} and \ref{d5}.
\end{rmk}

We assume the strategy space $\mathcal{S}$ is curved around the strategy of a firm with market share $x_i(s)\ra 1$ otherwise, the space is a Minkowski flat strategy spacetime which follows Garding-Wightman axioms in quantum field theory \cite{wightman1965}.

\begin{ax}\label{ax1}
(Garding-Wightman axioms of strategy spacetime)
\begin{itemize}
\item For any two coordinate systems $y$ and $y'$ such that $y\ra y'=e+\Lambda y$ there exists a physical Hilbert space $\mathbb{H}$ where a unitary representation $\mathbb{U}(e,\Lambda)$ with respect to the translation $e\in\mathbb{R}^{3}$ and Lorentz transformation $\Lambda_{3\times 3}$ of the Poincar\'e spinor group, $P_0$ acts.

\item The wave paths of the energy of the strategy of the firm  $P$ are concentrated in a closed upper forward light cone in Kruskal-Szekeres coordinate.

\item If a monopolist firm does not have any constraint imposed by the market, then  this firm can achieve a tremendous amount of energy to move in the strategy spacetime because of the Casimir-Polder effect \cite{casimir1948}. In reality this may not be achievable because of market dynamics and the existence of other firms which work in the opposite direction of this energy, such that the strategy field exerts very low energy. This property is equivalent to the vaccum state (the ket vector with zero potential $|0\rangle$) and in this case a unique unit vector $|0\rangle$ exists in $\mathbb{H}$ such that it is invariant to the spacetime translation $\mathbb{U}(e,1)$.

\item The components $\Psi_i$ of the quantum strategy spacetime are operator valued distributions over $\mathfrak{S}(\mathcal{M})$ with domain $\mathbb{D}$, which is common all the operators and is dense in $\mathbb{H}$.

\item $\mathbb{U}(e,\Lambda) \Psi_i(r)\mathbb{U}(e,\Lambda)^{-1}=\sum_j W_{ij}(\Lambda^{-1}) \Psi_j(e+\Lambda r)$, where $W_{ij}(\Lambda^{-1})$ is a complex or real finite dimensional matrix representation of the M\"obius map $\text{SL}(2,C)$.

\item For any space-like separation (two dimensions of strategy spacetime without imaginary time) of arguments $r_1$ and $r_2$ any two field components $\Psi_i(r_1)$ and $\Psi_j(r_2)$ either commute or anticommute.

\item Define a set $\mathbb{D}_0$ as a finite linear combinations of vectors of the form $\Psi_{i1}(\pi_{i1}),...,\Psi_{ik}(\pi_{k1})|0\rangle$. This combination is dense in $\mathbb{H}$. 
\end{itemize}
\end{ax}

\begin{as}\label{as1}
(Assumptions on the background manifold $\mathcal{M}$)
\begin{itemize}
\item For multi-indices $\gamma,\be$ there exists a family $\mathscr{F}=\{F_\gamma, \phi_\gamma\}$ of coordinate neighborhoods on the manifold $\mathcal{M}$ such that,\\ (i) $F_\gamma$ covers $\mathcal{M}$,\\ (ii) For any $\gamma$ and $\be$ the neighborhoods $F_\gamma$, $\phi_\be$ and $F_\be$, $\phi_\gamma$ are $C^\infty$-compatible and,\\ (iii) For any coordinate neighborhood $F, \phi$ compatible with every $F_\gamma, \phi_\gamma\in\mathscr{F}$ is itself in $\mathscr{F}$, then $\mathcal{M}$ is a smooth manifold \cite{boothby1986}.

\item The strategy of firm $i$ moves on smooth manifold $\mathcal{M}$ with its geodesic polygon $\a_{1i}^{\hat{\rho}}(s) \mathcal{A}_i(s)$ where the area of the polygon $\mathcal{A}_i(s)$ is defined in Equation (\ref{ind0}).

\item For all positive measures of market share of firm $i$, the strategy spacetime is either curved (Einstein metric) or, it is Minkowski flat and satisfies Axiom \ref{ax1}.

\item If firm $i$ does not want to cooperate with others, its strategy can move freely along the Minkowski strategy spacetime over time creating a curvature around the strategy based on the value of measure of its market share.

\item Inside a geodesic strategy polygon $\mathcal{A}_i(s)$  firm  $i$ faces a curved random surface (Liouville-like quantum gravity surface) which can be approximated by a Brownian surface \cite{miller2015liouville}.
\end{itemize}	
\end{as}

From  Assumption \ref{as1} and Axiom \ref{ax1} we know that the background manifold $\mathcal{M}$ is smooth and it is curved around the strategy of some firm with a positive measure of market share and is flat otherwise. In the flat case we consider the Minkowski metric $\eta_{\theta\nu}(U)$ for two co-ordinate systems $\theta$ and $\nu$ of the functional strategy space $U$ and for the curved case we consider the Einstein metric $G_{\theta\nu}(U)$ defined as $G_{\theta\nu}(U)=R_{\theta\nu}(U)-\frac{1}{2}Rg_{\theta\nu}(U)$, where $R_{\theta\nu}(U)$ is the Ricci curvature tensor, $R$ is Ricci scalar and $g_{\theta\nu}(U)$ is Riemann metric tensor on $U$. For a flat strategy spacetime $R_{\theta\nu}(U)-\frac{1}{2}Rg_{\theta\nu}(U)=0$ and therefore $G_{\theta\nu}(U)$ vanishes and takes the value of $\eta_{\theta\nu}(U)$. The Brownian surface is constructed from random planner maps based on quadrangulations and the Schramm-Loewner evolution (SLE) equation coincides with certain types of Liouville Quantum Gravity (LQG) surfaces (\cite{duplantier2011}, \cite{le2013}, \cite{mavromatos1989} and \cite{miermont2013}). Therefore, we directly use this surface to model the movement of the strategy of firm $i$ in its geodesic polygon $\mathcal{A}_i(s)$.

\medskip

As future strategies of a firm depend on the decisions from the governing bodies, their stubbornness takes an important part. If the governing body is too stubborn, then their decision is invariant with the change of the market environment. On the other hand, if they are very flexible, firm's reaction is very sensitive to the market environment. Let us define a homeomorphic function $\gamma$ such that, $\gamma: \mathfrak{s}\ra[0,2]$ where $\mathfrak{s}$ is a space of stubbornness where,
\begin{as}\label{as2}
 (i) The empty set $\emptyset\in\mathfrak{s}$ and for an element $\mathfrak{b}\in\mathfrak{s}$ the complement $\mathfrak{b}^c\in\mathfrak{s}$. \\ (ii)  $\mathfrak{s}$ is a convex, complete and bounded measurable set.\\ (iii) For ordered stubbornness levels $\mathfrak{b}_{[1]}, \mathfrak{b}_{[2]},..., \mathfrak{b}_{[n]}\in\mathfrak{s}$, we have $\bigcup_{i=1}^n \mathfrak{b}_{[i]} \in\mathfrak{s}$ such that $\gamma\left(\mathfrak{b}_{[1]}\right)=0$ and $\gamma\left(\mathfrak{b}_{[n]}\right)=2$.\\ (iv) For $i\neq j$ we have $\mathfrak{b}_{[i]}\bigcap\mathfrak{b}_{[j]}=\emptyset$. \\ (v) For $i=1,2,... ,n$, $\gamma\left(\bigcup_{i=1}^n \mathfrak{b}_{[i]}\right)=\sum_{i=1}^n \gamma\left(\mathfrak{b}_{[i]}\right)$.	
\end{as}

\medskip

\begin{defn}\label{d6}
(Stubbornness distribution)
For any domain $D$ in the complex strategy spacetime $U$, $z\in D$,  $\gamma:\mathfrak{s}\ra[0,2]$ and for a Riemann metric tensor $g_{\theta\nu}(U)$ the stubbornness distribution $\mathfrak{B}$ is defined as $\mathfrak{B}=e^{\gamma\ b(z)}\ g_{\theta\nu}(U)$, where the real valued function $b(z):\mathbb{C}\ra\mathbb{R}$ is a Gaussian free field and is defined by the Dirichlet inner product $\boldsymbol{\nabla}(f_1,\ f_2)(z)=(2\pi)^{-1}\int_D\ \boldsymbol{\nabla} f_1(z).\ \boldsymbol{\nabla}f_2(z)\ d^2z$, where $\boldsymbol{\nabla}$ is the gradient of the two  orthonormal bases $f_1$ and $f_2$ and, $d^2z$ is Lebesgue measure.
\end{defn}

\begin{remark}\label{r2}
Following \cite{duplantier2011}, \cite{knizhnik1988} and \cite{polyakov1981}, if $\gamma\ra 2$ the random space becomes flat with a few peaks which are termed as baby universes. Hence, at $\gamma\ra 2$ the governing body has extreme stubbornness and at $\gamma\ra 0$ the surface becomes random and hence, more flexible. Furthermore, if $\gamma=\sqrt{8/3}$, then the strategy spacetime becomes a Brownian surface (\cite{gwynne2016}, \cite{miller2015} and \cite{miller2015liouville}).
\end{remark}

\medskip

As a firm of zero market share does not exist in the strategy spacetime, the Minkowski metric $\eta_{\theta\nu}(U)$ does not require a multiplicative coefficient. On the other hand, as firms with positive market shares exist, the Einstein metric $G_{\theta\nu}(U)$ is multiplied by $e^{\gamma b(z)}$. The metric of the strategy spacetime is defined by $N_{\theta\nu}(U)=e^{\gamma b(z)}G_{\theta\nu}(U)+\eta_{\theta\nu}(U)$. The measure of market share of firm $i$ traversing time can be represented by an open string. A path over time in $U$ is ergodic and the history of a string should be in a torus-like area in our three dimensional strategy spacetime which is a world volume $\Sigma$ where $\sigma^{\mathscr{B}}=(s,\sigma_1,\sigma_2)$ are the three coordinates on the world-volume with $0\leq s\leq t$, $\sigma_{1i}<\sigma_1<\sigma_{1f}<\infty$, and $\sigma_{2i}<\sigma_2<\sigma_{2f}<\infty$, where $\sigma_{1i}$ and $\sigma_{1f}$ represent the initial and final values of $\sigma_1$ respectively. Similarly, $\sigma_{2i}$ and $\sigma_{2f}$ are initial and final values of $\sigma_2$ respectively.  The real valued function $b(s,\sigma_1,\sigma_2)$ is a map from the complex world-volume into the $3$-dimensional random space whose metric is $N_{\theta\nu}(U)$. In order to avoid the square root problem in Nambu-Goto  action (\cite{goto1971}, \cite{nambu1970}) we introduce an additional (auxiliary) field on the strategy spacetime and is represented as a world volume metric $h_{\tau_1\tau_2}(s,\sigma_1,\sigma_2)$ with signature $(-,+,+)$ . Assume $h^{\tau_1\tau_2}$ is an inverse and $h$ is the determinant of $h_{\tau_1\tau_2}$.

\medskip

For a dynamic continuously integrable profit function $\pi_i[s,x_i(s,\sigma_1,\sigma_2),u_i[\a_{1i}^{\hat{\rho}}(s) \mathcal{A}_i(s,\sigma_1,\sigma_2)]]$ firm $i$'s objective is to find $u$ that satisfy
\begin{align}\label{obj}
\max_{u\in U} \overline{\Pi}(u,t)&=\max_{u\in U} \int_0^t\int_{\sigma_{1i}}^{\sigma_{1f}}\int_{\sigma_{2i}}^{\sigma_{2f}}\ \biggr\{ \E \pi_i\big[s,x_i(s,\sigma_1,\sigma_2),u_i[\a_{1i}^{\hat{\rho}}(s) \mathcal{A}_i(s,\sigma_1,\sigma_2)]\big]\biggr\} b_i ds d\sigma_1 d\sigma_2\notag\\&=\max_{u\in U} \int_\Sigma\ \biggr\{ \E\pi_i\big[s,x_i(s,\sigma_1,\sigma_2),u_i[\a_{1i}^{\hat{\rho}}(s) \mathcal{A}_i(s,\sigma_1,\sigma_2)]\big]\biggr\} b_id\sigma^3,
\end{align}
subject to the market dynamics  
\begin{align}
dx_i(s,\sigma_1,\sigma_2)&=\mu_i^{\tau_1}[s,x_i(s,\sigma_1,\sigma_2),u_i[\a_{1i}^{\hat{\rho}}(s) \mathcal{A}_i(s,\sigma_1,\sigma_2)]] ds\notag\\&\hspace{.5cm}+\sum_{k=1}^3\ \omega_{i,k}^{\tau_1}[s,x_i(s,\sigma_1,\sigma_2),u_i[\a_{1i}^{\hat{\rho}}(s) \mathcal{A}_i(s,\sigma_1,\sigma_2)]] dB^k(s,\sigma_1,\sigma_2),
\end{align}
 with initial condition $x_i(0,\sigma_{1i},\sigma_{2i})=x_0^i$ for all $i=1,...,n$, where $b_i$ is a real valued stubbornness function of the governing body of $i^{th}$ firm, the area of $\mathcal{A}_i(s,\sigma_1,\sigma_2)$ is a convex geodesic polyhedron and $B^k(s,\sigma_1,\sigma_2)$ is a $3$-dimensional Brownian motion for each of $n$ firms. In Equation (\ref{obj}) the profit function $\pi_i=\pi_i^+-\pi_i^-$ is the combination of profit creation  $\pi_i^+$ and profit reduction $\pi_i^-$ operators which increase and decrease the profit by at least one unit respectively so that the profit state of firm $i$ at time $s$ is $\widetilde{\pi}_i=\pi_i^+\pm\iota\pi_i^-$, where $\iota$ is the imaginary unit. For two firms $i$ and $j$ these operators follow the Lie commutator bracket properties $[\pi_i^-,\pi_j^+]\equiv\delta_{ij}$ and $[\pi_i^+,\pi_j^+]=[\pi_i^-,\pi_j^-]=0$, where $\delta_{ij}$ is the Kronecker delta function for $i,j=1,2,...,n$. The imaginary number $\iota$ helps the dynamic profit function to move in the strategy space in a circular way. Without $\iota$, $\widetilde{\pi}_i$ moves along only left and right directions. The difference between $\pi_i$ and $\widetilde{\pi}_i$ is that, first one only the amount of contribution of each of the operators in determining firm $i$'s profit where $\widetilde{\pi}_i$ invokes a special dynamic movement towards the left or right. If $\widetilde{\pi}_i$ moves towards positive direction of time then $\pi_{i}^+$ has more contribution than $\pi_{i}^-$, and vice versa. If $\pi_i^-$ contributes more at first $\widetilde\pi_i$ moves backwards in time and firm $i$ will shut down at $\pi_i=0$.
 
 \begin{lem}\label{l}
 At a given fixed time $s$, if firm $i$ sells the rights of its products (bonds) to its very large $m\in\mathbb{N}$ identical consumer base ($m\ra\infty$) multiple times and for any asymmetric information $\kappa>0$, each consumer creates an exponential negative effect $\exp(-\kappa)$ on $\pi_i^-$ and creates an infinite negative effect in strategy spacetime as $\kappa\ra 0$.
 \end{lem}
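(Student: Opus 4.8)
The plan is to exploit the bosonic operator algebra already attached to the profit operators in the construction preceding the lemma, namely $[\pi_i^-,\pi_j^+]=\delta_{ij}$ together with $[\pi_i^+,\pi_j^+]=[\pi_i^-,\pi_j^-]=0$, so that $\pi_i^-$ and $\pi_i^+$ act as lowering and raising (annihilation and creation) operators on a Fock-type ladder of profit states of firm $i$. I would read the sale of one bond to one identical consumer as a single application of the profit-reduction operator $\pi_i^-$, i.e.\ the removal of one quantum of profit. The asymmetric-information parameter $\kappa$ enters as a Boltzmann-type weight: the hypothesis that each consumer creates the effect $\exp(-\kappa)$ on $\pi_i^-$ is taken to mean that each successive sale damps the running contribution by the multiplicative factor $\exp(-\kappa)$, so that after $m$ rounds of selling to the base the accumulated weight is $[\exp(-\kappa)]^m=\exp(-\kappa m)$. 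Because the base is identical and the firm sells multiple times with $m\to\infty$, the cumulative negative effect is the full sum over $m=0,1,2,\dots$ of these weights.

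The key computation is then to evaluate this cumulative effect as the thermal expectation of the profit-reduction count, equivalently the geometric series
\[
\langle N_i\rangle_\kappa=\frac{\sum_{m\ge 0} m\,e^{-\kappa m}}{\sum_{m\ge 0} e^{-\kappa m}}=\frac{e^{-\kappa}}{1-e^{-\kappa}}=\frac{1}{e^{\kappa}-1},
\]
where $N_i=\pi_i^+\pi_i^-$ is the number operator whose eigenvalue on the $m$-quantum state counts the profit-reducing sales. For every fixed $\kappa>0$ the series converges, so the effect on $\pi_i^-$ is finite and strictly negative of magnitude $1/(e^{\kappa}-1)$; I would emphasize that this is exactly the Bose--Einstein occupation number, a natural outcome given that $\pi_i^\pm$ obey the bosonic commutators above. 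Letting $\kappa\to 0^+$ and using $e^{\kappa}-1=\kappa+o(\kappa)$ gives $\langle N_i\rangle_\kappa=\kappa^{-1}+o(\kappa^{-1})\to\infty$, so the accumulated profit reduction diverges and the claimed infinite negative effect in the strategy spacetime follows.

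I expect the main obstacle to be not the $\kappa\to 0$ limit itself, which is elementary once the series is in hand, but the modelling step that converts ``identical consumers buying bonds multiple times under asymmetric information $\kappa$'' into the geometric weight $e^{-\kappa m}$ and hence the partition-function sum. Making this rigorous requires arguing that the marginal effect of successive identical sales compounds multiplicatively in $e^{-\kappa}$, and that the $m\to\infty$ limit legitimately produces the complete geometric series rather than a finite truncation. A secondary technical point is to verify convergence and the interchange of the operator sum on the common dense domain $\mathbb{D}$ supplied by Axiom \ref{ax1}, so that $\langle N_i\rangle_\kappa$ is a well-defined operator expectation for each $\kappa>0$ before the limit $\kappa\to 0$ is taken.
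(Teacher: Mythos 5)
Your proposal is essentially correct and lands on the same analytic core as the paper --- the quantity $e^{-\kappa}/(1-e^{-\kappa})$ multiplying $\pi_i^-$, whose divergence as $\kappa\to 0^+$ is the whole content of the lemma --- but it gets there by a genuinely different modelling route. The paper does not normalize by a partition function: it keeps the double sum over $m$ consumers and over the $\theta_j$ repeat sales to consumer $j$, assigns the $\rho_j$-th sale a contribution $\rho_j e^{-\rho_j\kappa}\pi_i^-$ (the linear factor $\rho_j$ coming from the ``one unit per sale'' assumption), and generates $\sum_{\rho_j}\rho_j[e^{-\kappa}]^{\rho_j-1}$ by differentiating the finite geometric series with respect to $e^{-\kappa}$; after the $m\to\infty$ and $\kappa\to 0$ approximations this collapses to $e^{-\kappa}/(1-e^{-\kappa})\,\pi_i^-$, i.e.\ your Equation for $\langle N_i\rangle_\kappa$ without the denominator sum. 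The final step also differs: you read off the $1/\kappa$ divergence of the occupation number directly, whereas the paper takes one more derivative in $\kappa$, expands to get $-1/\kappa^2+1/12$, and identifies the infinite \emph{negative} effect with the divergence of this marginal quantity to $-\infty$. Your Bose--Einstein reading, justified by the commutators $[\pi_i^-,\pi_j^+]=\delta_{ij}$, is a tidy and arguably more principled packaging (it makes the sign issue transparent, since a divergent positive coefficient on the reduction operator $\pi_i^-$ is automatically a divergent negative effect on profit), but it is not what the paper does; conversely the paper's unnormalized accumulation over consumers is what makes the $m\to\infty$ hypothesis do real work, a role it does not clearly play in your thermal-average formulation. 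Neither argument is fully rigorous about the interchange of limits or the operator-domain issues you flag, and the lemma is stated loosely enough that both readings are defensible.
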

 
 \begin{proof}
 Suppose, firm $i$ has $m$ consumers and sells bonds of a product $\theta$ times to its $j^{th}$ consumer. By assumption, each consumer is identical and when firm $i$ sells the bond of the product to the $j^{th}$ consumer for the $k^{th}$ time it creates an exponential negative effect $\exp(-k\kappa)$. We assume $\kappa\ra 0$ because consumer $j$ is small compared to  firm $i$ so that its individual effect is negligible.
 
 Suppose that firm $i$ sells a bond of a product  to the $j^{th}$ consumer $\theta_j$ times such that there is an addition of one unit of contribution to $\pi_i^-$ for each sale. The total contribution from consumer $j$ will be $\mathfrak{C}_j=\sum_{{\rho_j}=1}^{\theta_j} \rho_j\pi_i^-$. As $\kappa\ra0$, the negative effect of each sale of the bond to consumer $\rho$ can be written as $\mathfrak{C}_j=\sum_{\rho_j=1}^{\theta_j} \exp(-\rho_j\kappa) \rho_j\pi_i^-$. 
 
 At a fixed time $s$ we can use the geometric series 
 \begin{align}\label{b1}
 \sum_{\rho_j=1}^{\theta_j} \exp(-\rho_j\kappa)\pi_i^-&=\frac{1-\exp(-\theta_j\kappa)}{1-\exp(-\kappa)}\pi_i^-.
 \end{align}
  Differentiating both sides of Equation (\ref{b1}) with respect to $\exp(-\kappa)$ yields,
  \begin{align}\label{b2}
  \sum_{\rho_j=1}^{\theta_j} \rho_j [\exp(-\kappa)]^{\rho_j-1}\ \pi_i^-&=\pi_i^-\left[\frac{1-\exp[-(\theta_j+1)\kappa]}{[1-\exp(-\kappa)]^2}-\frac{(\theta_j+1)\exp(-\theta_j\kappa)}{1-\exp(-\kappa)}\right].
  \end{align}
  For all $m$ consumers the total effect is
  \begin{align}\label{b3}
  \sum_{j=1}^m\sum_{\rho_j=1}^{\theta_j} \rho_j [\exp(-\kappa)]^{\rho_j-1}\ \pi_i^-&=\sum_{j=1}^m\ \pi_i^-\left[\frac{1-\exp[-(\theta_j+1)\kappa]}{[1-\exp(-\kappa)]^2}-\frac{(\theta_j+1)\exp(-\theta_j\kappa)}{1-\exp(-\kappa)}\right].
  \end{align}
Letting $m\ra\infty$,  Equation (\ref{d3}) becomes,
\begin{align}\label{b4}
 \lim_{m\ra\infty}\ \sum_{j=1}^m\sum_{\rho_j=1}^{\theta_j} \rho_j [\exp(-\kappa)]^{\rho_j-1}\ \pi_i^-&\approx  \frac{\pi_i^-}{[1-\exp(-\kappa)]^2}=\frac{\pi_i^-}{1-2\exp(-\kappa)+\exp(-2\kappa)}.
\end{align}
As $\kappa\ra 0$, $\exp(-2\kappa)\approx \exp(-\kappa)$, so that
\begin{align}\label{b5}
\lim_{m\ra\infty}\ \sum_{j=1}^m\sum_{\rho_j=1}^{\theta_j} \rho_j [\exp(-\kappa)]^{\rho_j-1}\ \pi_i^-&\approx  \frac{\pi_i^-}{1-\exp(-\kappa)}.
\end{align}
Multiplying $\exp(-\kappa)$ into both sides of Equation (\ref{b5}) we get,
\begin{align}\label{b6}
\lim_{m\ra\infty}\ \sum_{j=1}^m \mathfrak{C}_j=\lim_{m\ra\infty} \exp(-\kappa) \sum_{j=1}^m\sum_{\rho_j=1}^{\theta_j} \rho_j [\exp(-\kappa)]^{\rho_j-1} \pi_i^-&=  \frac{ \exp(-\kappa)}{1-\exp(-\kappa)}\ \pi_i^-.
\end{align}
After expanding the exponential series in the numerator and denominator of the right hand side and differentiating both sides with respect to $\kappa$, Equation(\ref{b6}) becomes,
\begin{align}
\frac{\partial}{\partial\kappa}\left(\lim_{m\ra\infty}\ \sum_{j=1}^m \mathfrak{C}_j\right)& \approx \pi_i^- \frac{\partial}{\partial\kappa}\left\{ \frac{1}{\kappa}\left[1-\frac{\kappa}{2}+\frac{\kappa^2}{12}\right]\right\}=-\frac{1}{\kappa^2}+\frac{1}{12}.\notag
\end{align}
 As we assumed $\kappa\ra0$ then $\frac{\partial}{\partial\kappa}\left(\lim_{m\ra\infty}\ \sum_{j=1}^m \mathfrak{C}_j\right)=-\infty$. Therefore, firm $i$ creates an infinite negative effect by selling bonds repeatedly to its consumers.
 \end{proof}
 
 \begin{remark}
 Lemma \ref{l} might explain the $2007$ housing market crisis in the United States where housing bonds were sold multiple times to the same customer and created an infinite negative effect on $\pi_i^-$ such that companies like Lehman Brothers faced a complete shut down.
 \end{remark}
 \medskip
 
Following \cite{ito1962}, for any $3$-dimensional manifold $\mathcal{M}$ we can write the ${\tau_1}^{th}$-dimensional drift component of $i^{th}$ firm as $\mu_i^{\tau_1}=-1/2\  _ih^{k_1 k_2}\ _i\Gamma_{k_1 k_2}^{\tau_1}$, where $_i\Gamma_{k_1 k_2}^{\tau_1}$ is the Christoffel symbol defined as,
\begin{align}\label{ch}
_i\Gamma_{k_1 k_2}^{\tau_1}&=\mbox{$\frac{1}{2}$}\ _ih^{\tau_1\delta}\biggr(\ _ih_{\delta k_2;\ k_1}+\ _ih_{\delta k_1;\ k_2}-\ _ih_{k_1 k_2;\ \delta}\biggr),
\end{align}
where $\tau_1,\delta,k_1,k_2=\{1,2,3\}^4$,  $_ih_{\delta k_2;\ k_1}=\partial_{k_1}^i h_{\delta k_2}$, $_ih_{\delta k_1;\ k_2}=\partial_{k_2}^i h_{\delta k_1}$ and $_ih_{k_1 k_2;\ \delta}=\partial_{\delta}^i h_{k_1 k_2}$. Furthermore, we can write $\sum_{k=1}^3\ \omega_{i,k}^{\tau_1}\ \omega_{i,k}^{\tau_2}=\  _ih^{\tau_1\tau_2}$. 

\medskip

\begin{as}\label{as3}
For any $3$-dimensional manifold $\mathcal{M}$ and for constants $0<\mathfrak{A}<\infty$ and $0<\mathfrak{B}<\infty$,  locally Euclidean space satisfies,
\begin{align}
&\sum_{\tau_1=1}^3\ \big|\mu_i^{\tau_1}[s,x_i(s,\sigma_1,\sigma_2),u_i[\a_{1i}^{\hat{\rho}}(s)\ \mathcal{A}_i(s,\sigma_1,\sigma_2)]]-\mu_i^{\tau_1}[s,y_i(s,\sigma_1,\sigma_2),u_i[\a_{1i}^{\hat{\rho}}(s)\ \mathcal{A}_i(s,\sigma_1,\sigma_2)]]\big|^2\notag\\&\hspace{1cm}\leq\mathfrak{A}\ \big|\big|x_i(s,\sigma_1,\sigma_2)-y_i(s,\sigma_1,\sigma_2)\big|\big|^2,\notag
\end{align}
and,
\begin{align}
&\sum_{k=1}^3\ \sum_{\tau_1=1}^3\ \big| \omega_{i,k}^{\tau_1}[s,x_i(s,\sigma_1,\sigma_2),u_i[\a_{1i}^{\hat{\rho}}(s)\ \mathcal{A}_i(s,\sigma_1,\sigma_2)]]- \omega_{i,k}^{\tau_1}[s,y_i(s,\sigma_1,\sigma_2),u_i[\a_{1i}^{\hat{\rho}}(s)\ \mathcal{A}_i(s,\sigma_1,\sigma_2)]]\big|^2\notag\\&\hspace{1cm}\leq \mathfrak{B}\ \big|\big| x_i(s,\sigma_1,\sigma_2)-y_i(s,\sigma_1,\sigma_2)\big|\big|^2,\notag
\end{align}
there exists $\delta>0$ such that, for all $|x-y|<\delta$, that locally satisfies,
\begin{align}
\lim_{\delta\ra 0}\big|\big| x_i(s,\sigma_1,\sigma_2)-y_i(s,\sigma_1,\sigma_2)\big|\big|^2=\sum_{\tau_1=1}^3\ \big|x_i^{\tau_1}-y_i^{\tau_1}\big|^2,\notag
\end{align}
where $\mu_i^{\tau_1}$, $\omega_{i,k}^{\tau_1}$ are continuous in time and for any $x$ and finite positive constants $\mathfrak{A}_0,\mathfrak{B}_0$, locally
\begin{align}
\sum_{\tau_1=1}^3\ \big|\mu_i^{\tau_1}[s,x_i(s,\sigma_1,\sigma_2),u_i[\a_{1i}^{\hat{\rho}}(s)\ \mathcal{A}_i(s,\sigma_1,\sigma_2)]]\big|^2\leq\mathfrak{A}_0,\notag
\end{align}
and
\begin{align}
\sum_{k=1}^3\ \sum_{\tau_1=1}^3\ \big| \omega_{i,k}^{\tau_1}[s,x_i(s,\sigma_1,\sigma_2),u_i[\a_{1i}^{\hat{\rho}}(s)\ \mathcal{A}_i(s,\sigma_1,\sigma_2)]]\big|^2\leq\mathfrak{B}_0.\notag
\end{align}
\end{as}

\medskip

\begin{defn}\label{Nas}
(Cooperative Nash Equilibrium) 	A set of optimal strategies \\ $\{u_1^*(s,\sigma_1,\sigma_2),...,u_n^*(s,\sigma_1,\sigma_2)\}$ forms a cooperative Nash equilibrium of an n-firm differential game if 
\begin{align}\label{nas10}
&\E\ \left\{\int_\Sigma\ \pi_i\left[s,x_i^*(s,\sigma_1,\sigma_2), u_i^*[\a_{1i}^{\hat{\rho}}(s)\ \mathcal{A}(s,\sigma_1,\sigma_2)],u_{-i}^*[\a_{2,-i}^{\tilde{\rho}}(s)\ \mathcal{A}_{-i}(s,\sigma_1,\sigma_2)]\right] b_i\ d\sigma^3\right\}\notag\\&\hspace{1cm}\geq\ \E\ \left\{\int_\Sigma\  \pi_i[s,x_i(s,\sigma_1,\sigma_2), u_i[\a_{1i}^{\hat{\rho}}(s)\ \mathcal{A}_i(s,\sigma_1,\sigma_2)],u_{-i}^*[\a_{2,-i}^{\tilde{\rho}}(s)\ \mathcal{A}_{-i}(s,\sigma_1,\sigma_2)]] \ b_i\ d\sigma^3\right\},
\end{align}
for all $i\in\{1,...,n\}$ where $t\in(0,\infty)$,
subject to the constraints,
\begin{align}\label{nas11}
dx_i^*(s,\sigma_1,\sigma_2)&=\mu_i^k[s,x_i^*(s,\sigma_1,\sigma_2), u_i^*[\a_{1i}(s)\ \mathcal{A}_i(s,\sigma_1,\sigma_2)],u_{-i}^*[\a_{2,-i}(s)\ \mathcal{A}_{-i}(s,\sigma_1,\sigma_2)]]ds\notag\\&\hspace{.5cm}+\omega_{i,k}^{\tau_1}\big[s,x_i^*(s,\sigma_1,\sigma_2), u_i^*[\a_{1i}(s)\ \mathcal{A}_i(s,\sigma_1,\sigma_2)],u_{-i}^*[\a_{2,-i}(s)\ \mathcal{A}_{-i}(s,\sigma_1,\sigma_2)]\big]\ dB^k(s,\sigma_1,\sigma_2),
\end{align}
with initial condition $x_i^*(0,\sigma_{1i},\sigma_{2i})=x_0^{i*}$ and
\begin{align}\label{nas11.1}
&dx_i(s,\sigma_1,\sigma_2)\notag\\&=\mu_i^{\tau_1}[s,x_i(s,\sigma_1,\sigma_2),u_i[\a_{1i}^{\hat{\rho}}(s)\ \mathcal{A}_i(s,\sigma_1,\sigma_2)],u_{-i}^*[\a_{2,-i}^{\tilde{\rho}}(s)\ \mathcal{A}_{-i}(s,\sigma_1,\sigma_2)]]ds\notag\\&\hspace{1cm}+\omega_{i,k}^{\tau_1}[s,x_i(s,\sigma_1,\sigma_2),u_i[\a_{1i}^{\hat{\rho}}(s)\ \mathcal{A}_i(s,\sigma_1,\sigma_2)],u_{-i}^*[\a_{2,-i}^{\tilde{\rho}}(s)\ \mathcal{A}_{-i}(s,\sigma_1,\sigma_2)]]\ dB^k(s,\sigma_1,\sigma_2),
\end{align}	
with initial condition $x_i(0,\sigma_{1i},\sigma_{2i})=x_0^{i}$, for $i=1,...,n$ where $\a_{1i}^{\hat{\rho}}(s)$ is the degree of cooperation of firm $i$ in its strategy  and $\a_{2,-i}^{\tilde{\rho}}(s)$ is the degree of cooperation of the other firms apart from firm $i$ in terms of firm $i$'s point of view.
\end{defn}

\medskip

 Hence, under a cooperative Nash equilibrium firm $i$'s optimization problem is to find $u_i$ such that
 \begin{align}\label{nas9}
 \max_{u_i\in U} {\Pi}^N(u_i,t)=\max_{u_i\in U}\ &\E \int_\Sigma \pi_i\bigg[s,x_i(s,\sigma_1,\sigma_2),u_i[\a_{1i}^{\hat{\rho}}(s) \mathcal{A}_i(s,\sigma_1,\sigma_2)],\notag\\&\hspace{1cm}u_{-i}^*[\a_{2,-i}^{\tilde{\rho}}(s) \mathcal{A}_{-i}(s,\sigma_1,\sigma_2)]\bigg] b_i d\sigma^3,
 \end{align}
 subject to 
\begin{align}\label{nas9.1}
& dx_i(s,\sigma_1,\sigma_2)\notag\\&=\mu_i^{\tau_1}[s,x_i(s,\sigma_1,\sigma_2),u_i[\a_{1i}^{\hat{\rho}}(s) \mathcal{A}_i(s,\sigma_1,\sigma_2)],u_{-i}^*[\a_{2,-i}^{\tilde{\rho}}(s) \mathcal{A}_{-i}(s,\sigma_1,\sigma_2)]] ds\notag\\&\hspace{.25cm}+\omega_{i,k}^{\tau_1}[s,x_i(s,\sigma_1,\sigma_2),u_i[\a_{1i}^{\hat{\rho}}(s) \mathcal{A}_i(s,\sigma_1,\sigma_2)],u_{-i}^*[\a_{2,-i}^{\tilde{\rho}}(s) \mathcal{A}_{-i}(s,\sigma_1,\sigma_2)]] dB^k(s,\sigma_1,\sigma_2),
\end{align}
 with initial condition $x_i(0,\sigma_{1i},\sigma_{2i})=x_0^i$ for all $i=1,...,n$ where $u_{-i}^*[\a_{2,-i}^{\tilde{\rho}}(s) \mathcal{A}_{-i}(s,\sigma_1,\sigma_2)]$ is the optimized strategies other than the $i^{th}$ firm. As we assume some non-zero initial condition which represents a plane for this problem is similar to a pseudo Neuman-brane. If the initial condition is zero then it is a pseudo Dirichlet-brane. The extreme volatility of market dynamics therefore  Neuman boundary condition (first order derivative of $\sigma$ with respect to $x$ is zero) is appropriate for this case.

If firm $i$'s objective is Equation (\ref{nas9}) subject to the stochastic differential equation expressed in Equation (\ref{nas9.1}) then the Liouville-like $2$-brane action function under a $11$-dimensional Clifford torus with $\tau_1,\tau_2=\{1,2,3\}^2$, and $\theta,\nu,\hat{\theta}_k=\{4,..,11\}^3$ is the coordinate system in the transverse direction with $k=1,2,3$ is 
 \begin{align}\label{liouville}
&\mathfrak{F}_{0,t}^i(x)\notag\\&=\mbox{$\frac{1}{2}$}\int_\Sigma \E_s\biggr\{\sqrt{h} \bigg[3+h^{\tau_1\tau_2} \partial_{{\tau}_1} \chi_i^\theta \partial_{\tau_2} \chi_i^\nu N_{\theta\nu} \pi_i^\Omega[s,x_i(s,\sigma_1,\sigma_2),u_i[\a_{1i}^{\hat{\rho}}(s) \mathcal{A}_i(s,\sigma_1,\sigma_2)],\notag\\&u_{-i}^*[\a_{2,-i}^{\tilde{\rho}}(s) \mathcal{A}_{-i}(s,\sigma_1,\sigma_2)]] b_i^\Omega-\mbox{$\frac{1}{3!\sqrt h}$} \varepsilon^{\hat{\tau_1}\hat{\tau_2}\hat{\tau_3}} \partial_{\hat{\tau}_1} \chi_i^{\hat{\theta}_1} \partial_{\hat{\tau}_2} \chi_i^{\hat{\theta}_2} \partial_{\hat{\tau}_3} \chi_i^{\hat{\theta}_3} H_{\hat{\theta}_1\hat{\theta}_2\hat{\theta}_3}\times\notag\\& \pi_i^{1-\Omega}[s,x_i(s,\sigma_1,\sigma_2),u_i[\a_{1i}^{\hat{\rho}}(s) \mathcal{A}_i(s,\sigma_1,\sigma_2)],u_{-i}^*[\a_{2,-i}^{\tilde{\rho}}(s) \mathcal{A}_{-i}(s,\sigma_1,\sigma_2)]]  b_i^{1-\Omega}\notag\\&-Q\widetilde{R}\overline {x}-\lambda(s+ds,\sigma_1,\sigma_2)\big[x_i(s+ds,\sigma_1,\sigma_2)-x_i(s,\sigma_1,\sigma_2)\notag\\&-\mu_i^{\tau_1}[s,x_i(s,\sigma_1,\sigma_2),u_i[\a_{1i}^{\hat{\rho}}(s) \mathcal{A}_i(s,\sigma_1,\sigma_2)],u_{-i}^*[\a_{2,-i}^{\tilde{\rho}}(s) \mathcal{A}_{-i}(s,\sigma_1,\sigma_2)]]ds\notag\\&-\omega_{i,k}^{\tau_1}[s,x_i(s,\sigma_1,\sigma_2),u_i[\a_{1i}^{\hat{\rho}}(s) \mathcal{A}_i(s,\sigma_1,\sigma_2)],u_{-i}^*[\a_{2,-i}^{\tilde{\rho}}(s) \mathcal{A}_{-i}(s,\sigma_1,\sigma_2)]] dB^k(s,\sigma_1,\sigma_2)\big]\bigg] d\sigma^{3}\biggr\},
\end{align}
where $\Sigma$ is the $3$-dimensional strategy volume in time interval $[0,t]$, $h$ is the determinant of $3$-dimensional volume metric $h_{\tau_1\tau_2}$, $\Omega\in(0,1)$ is the degrees of freedom for the profit $\pi_i$, $\varepsilon^{\hat{\tau}_1\hat{\tau}_2\hat{\tau}_3}$ is Levi-Civita symbol corresponding to the transverse dimensions of the strategy field with its antisymmetric metric defined as $H_{\hat{\theta}_1\hat{\theta}_2\hat{\theta}_3}=-\varepsilon_{\hat{\tau}_1\hat{\tau}_2\hat{\tau}_3} \mbox{det}^{-1}(g_{\tau_1\tau_2})$ with metric tensor $g_{\tau_1\tau_2}$, $Q\in [2,\infty)$ is the stubbornness measure, $\widetilde R$ is the Ricci curvature scalar and $\overline {x}$ is the mean measure of all possible market shares.
 
 \begin{prop}\label{thm1}
 If firm $i$'s objective satisfies Liouville-like $2$-brane action expressed in Equation (\ref{liouville}) in an affine non-abelian conformal gauge strategy spacetime such that Assumptions \ref{as1}-\ref{as3}, and Axiom \ref{ax1} hold then there exists a strictly monotonically increasing function $F^i$ with a unique Haar measure $de dc dx$ such that for a large positive finite number $M$ and for a small time interval $\epsilon\ra 0$ there exists a quantum Schr\"odinger-like equation (more precisely a Dirac-like equation),
 \begin{align}\label{Schrodinger}
 \partial_s \Psi_{s}^{\tau,i}(x) &=\mbox{$\frac{\iota}{2M}$}\ F_0^i\ D_{\tau_1}D^{\tau_1}\Psi_s^{\tau,i},
 \end{align}
 where $D_{\tau_1}D^{\tau_1}\Psi_s^{\tau,i}$ is covariant Laplacian of $i^{th}$ firm's transition function $\Psi_s^{\tau,i}$, and $i^{th}$ firm's optimal degree of cooperation $\hat\rho^*$ is obtained by optimizing Equation (\ref{Schrodinger}) with respect to $\hat{\rho}$ and solve for it.
 \end{prop}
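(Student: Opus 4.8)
The plan is to realize the transition function $\Psi_s^{\tau,i}(x)$ as the Liouville--Feynman amplitude generated by the action $\mathfrak{F}_{0,t}^i$ of Equation (\ref{liouville}), and to extract its infinitesimal generator by a short-time (It\^o) expansion of the corresponding path integral. First I would fix the affine non-abelian conformal gauge so that the world-volume metric $h_{\tau_1\tau_2}$ is locally conformally flat; this is where the hypotheses enter, since Assumption \ref{as1}, the Garding--Wightman structure of Axiom \ref{ax1}, and the conformal factor $e^{\gamma b(z)}$ in $N_{\theta\nu}(U)$ together guarantee that the residual gauge freedom is exactly the affine conformal group. That group is locally compact, being a closed subgroup of the Poincar\'e spinor group $P_0$ acting on $\mathbb{H}$, so by the Haar--Weil existence-and-uniqueness theorem it carries a left-invariant measure unique up to a positive scalar; writing its coordinates as the translation $e$, the conformal parameter $c$, and the base coordinate $x$ produces the asserted $de\,dc\,dx$. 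The normalization of this measure, after the Gaussian fluctuation integral is performed, is the strictly increasing function $F^i$ (monotone in the market-share argument because it is assembled from the positive-definite metric $N_{\theta\nu}$ and the monotone profit $\pi_i$), with $F_0^i$ its value on the initial time slice.

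Next I would exploit the Chapman--Kolmogorov semigroup property of the amplitude over the concatenation $[0,s]\cup[s,s+\epsilon]$ to write
\begin{align}
\Psi_{s+\epsilon}^{\tau,i}(x)&=\int \exp\!\big(\iota\,\mathfrak{F}_{s,s+\epsilon}^i\big)\,\Psi_s^{\tau,i}(y)\,F^i\,de\,dc\,dy,\notag
\end{align}
and discretize the short interval using the market-dynamics constraint (\ref{nas9.1}), substituting $y=x+\mu_i^{\tau_1}\epsilon+\sum_k\omega_{i,k}^{\tau_1}\Delta B^k$. The Lagrange-multiplier term in (\ref{liouville}) enforces precisely this increment, so on $[s,s+\epsilon]$ the action collapses to its kinetic piece $\tfrac12 h^{\tau_1\tau_2}\partial_{\tau_1}\chi_i^\theta\partial_{\tau_2}\chi_i^\nu N_{\theta\nu}$ weighted by $\pi_i^\Omega b_i^\Omega$, which to leading order behaves like $N_{\theta\nu}\,\Delta\chi^\theta\Delta\chi^\nu/(2\epsilon)$, together with the curvature and Wess--Zumino terms that are $O(\epsilon)$.

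Then I would Taylor-expand $\Psi_s^{\tau,i}(x+\xi)$ to second order in the increment $\xi$ and integrate the Gaussian over $\xi$, using $\E[\Delta B^k]=0$ and $\E[\Delta B^k\Delta B^l]=\epsilon\,\delta^{kl}$ together with $\sum_k\omega_{i,k}^{\tau_1}\omega_{i,k}^{\tau_2}={}_ih^{\tau_1\tau_2}$ and $\mu_i^{\tau_1}=-\tfrac12{}_ih^{k_1k_2}{}_i\Gamma_{k_1k_2}^{\tau_1}$ from Equation (\ref{ch}). The first moment supplies the connection term while the second moment supplies the metric-contracted second derivative, and by the very definition of $\mu$ in terms of $\Gamma$ these combine into the covariant Laplacian $D_{\tau_1}D^{\tau_1}$ rather than the flat one; the constant $\tfrac{1}{2M}$ is read off as the inverse of the effective kinetic coefficient (the mass $M$) of the $N_{\theta\nu}$-weighted term. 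Matching $O(\epsilon)$ terms against $\Psi_{s+\epsilon}^{\tau,i}=\Psi_s^{\tau,i}+\epsilon\,\partial_s\Psi_s^{\tau,i}+o(\epsilon)$ and letting $\epsilon\ra0$ gives
\begin{align}
\partial_s\Psi_s^{\tau,i}(x)&=\frac{\iota}{2M}\,F_0^i\,D_{\tau_1}D^{\tau_1}\Psi_s^{\tau,i},\notag
\end{align}
the factor $\iota$ being inherited from the imaginary-time signature $(-,+,+)$ and from the profit state $\widetilde\pi_i=\pi_i^+\pm\iota\pi_i^-$, whose creation/annihilation (spinorial) structure upgrades the parabolic equation to the Dirac-like first-order form. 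Finally, since $\pi_i$ and hence $F_0^i$ depend on $\hat\rho$ through $\a_{1i}^{\hat\rho}(s)\mathcal{A}_i$, the optimal degree of cooperation $\hat\rho^*$ is obtained by differentiating the amplitude, equivalently the Nash payoff $\Pi^N$ of Equation (\ref{nas9}), in $\hat\rho$ and solving the first-order condition.

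I expect the main obstacle to be the curved-space short-time expansion: controlling the operator-ordering and discretization ambiguity so that the prescription $\mu_i^{\tau_1}=-\tfrac12{}_ih^{k_1k_2}{}_i\Gamma_{k_1k_2}^{\tau_1}$ reproduces the covariant Laplacian exactly, with the correct curvature contributions, rather than the naive coordinate Laplacian, and simultaneously ensuring that the Gaussian normalization $F^i$ is finite, positive, and strictly increasing. This forces me to invoke the Lipschitz and growth bounds of Assumption \ref{as3} to justify interchanging the limit, expectation, and integration, and the Brownian-surface regularity at $\gamma=\sqrt{8/3}$ from Remark \ref{r2} to keep the conformal factor $e^{\gamma b(z)}$ integrable.
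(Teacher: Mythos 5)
Your overall strategy coincides with the paper's: realize $\Psi^{\tau,i}_s$ as a short-time Liouville--Feynman amplitude, Taylor-expand in time on the left and in the spatial increment on the right, compute Gaussian moments of the kernel, and read off a covariant Laplacian as $\epsilon\ra 0$. Two sub-steps are, however, done genuinely differently. First, you obtain $de\,dc\,dx$ as a Haar measure on the locally compact affine conformal gauge group, reading $e$ as a translation and $c$ as a conformal parameter; in the paper $e^{\tau_1\tau_2}$ and $c^{\tau_1}$ are Grassmann-valued anti-commuting Faddeev--Popov ghost tensors entering through the ghost action $\mathfrak{F}^{iG}_{0,t}(e,c,h)=\frac{1}{2\pi\epsilon}\int_\Sigma\sqrt h\,e^{\tau_1\tau_2}\nabla^{\tau_1}c^{\tau_2}\,d\sigma^3$, so $de\,dc$ is a Berezin-type integral rather than a group-invariant measure. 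Your Haar--Weil argument is the only place either text actually justifies the word ``Haar'' in the statement, but it does not perform the cancellation of unphysical gauge degrees of freedom that is the paper's stated reason for introducing $e$ and $c$ via $\Delta_{FP}[h]$. Second, you generate the connection term of $D_{\tau_1}D^{\tau_1}$ from the It\^o drift $\mu_i^{\tau_1}=-\frac12\,{}_ih^{k_1k_2}\,{}_i\Gamma^{\tau_1}_{k_1k_2}$ acting through the first moment of the increment; the paper instead absorbs the entire SDE constraint into a parallel-displacement tensor ${}_ig$ by the generalized It\^o formula (so drift and diffusion are buried in the potential $V_i$) and produces the connection term from the post-point expansion $\Delta x^\lambda=\Delta\xi^\lambda+\frac12\Gamma^\lambda_{\tau_1\tau_2}\Delta\xi^{\tau_1}\Delta\xi^{\tau_2}-\cdots$ combined with the two-point correlator $\langle\Delta\xi^{\tau_1}\Delta\xi^{\tau_2}\rangle=\frac{\iota\epsilon}{M}F_0^i\hat h^{\tau_1\tau_2}$ of the zeroth-order kernel. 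Your route makes the probabilistic origin of the covariant Laplacian transparent and ties it directly to Assumption \ref{as3}; the paper's route is the one that actually defines $F^i$, through $F^i\hat h_{\tau_1\tau_2}=3+h^{\tau_1\tau_2}\widehat N_{\tau_1\tau_2}(\pi_ib_i)^\Omega-\cdots-V_i$, and hence explains where the prefactor $F_0^i$ in Equation (\ref{Schrodinger}) comes from, which your Gaussian-normalization reading of $F^i$ leaves vaguer. Both arguments share the same unaddressed point: the residual potential contribution $V_i\Psi$ at order $\epsilon$ is silently dropped from the final equation, so neither derivation is more complete there.
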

 
 \begin{proof}
 Equation (\ref{nas9.1}) can be written as
 \begin{align}\label{nas9.2}
 & x_i(s+ds,\sigma_1,\sigma_2)-x_i(s,\sigma_1,\sigma_2)\notag\\&=\mu_i^{\tau_1}[s,x_i(s,\sigma_1,\sigma_2),u_i[\a_{1i}^{\hat{\rho}}(s) \mathcal{A}_i(s,\sigma_1,\sigma_2)],u_{-i}^*[\a_{2,-i}^{\tilde{\rho}}(s) \mathcal{A}_{-i}(s,\sigma_1,\sigma_2)]] ds\notag\\&+\omega_{i,k}^{\tau_1}[s,x_i(s,\sigma_1,\sigma_2),u_i[\a_{1i}^{\hat{\rho}}(s) \mathcal{A}_i(s,\sigma_1,\sigma_2)],u_{-i}^*[\a_{2,-i}^{\tilde{\rho}}(s) \mathcal{A}_{-i}(s,\sigma_1,\sigma_2)]] dB^k(s,\sigma_1,\sigma_2).
 \end{align}
 Therefore, the Liouville like $2$-brane action function \cite{mavromatos1989} of the $i^{th}$ firm in time $[0,t]$ becomes,
 \begin{align}\label{9.2}
 &\mathfrak{F}_{0,t}^i(x)\notag\\&=\mbox{$\frac{1}{2}$}\int_\Sigma \E_s\biggr\{\sqrt{h} \bigg[3+h^{\tau_1\tau_2} \partial_{\tau_1} \chi_i^\theta\ \partial_{\tau_2} \chi_i^\nu N_{\theta\nu} \pi_i^\Omega[s,x_i(s,\sigma_1,\sigma_2),u_i[\a_{1i}^{\hat{\rho}}(s) \mathcal{A}_i(s,\sigma_1,\sigma_2)],\notag\\&u_{-i}^*[\a_{2,-i}^{\tilde{\rho}}(s) \mathcal{A}_{-i}(s,\sigma_1,\sigma_2)]] b_i^\Omega-\mbox{$\frac{1}{3!\sqrt h}$} \chi_i^{\hat{\theta}_1} \partial_{\hat{\tau}_2} \chi_i^{\hat{\theta}_2} \partial_{\hat{\tau}_3} \chi_i^{\hat{\theta}_3} H_{\hat{\theta}_1\hat{\theta}_2\hat{\theta}_3}\times\notag\\& \pi_i^{1-\Omega}[s,x_i(s,\sigma_1,\sigma_2),u_i[\a_{1i}^{\hat{\rho}}(s) \mathcal{A}_i(s,\sigma_1,\sigma_2)],u_{-i}^*[\a_{2,-i}^{\tilde{\rho}}(s) \mathcal{A}_{-i}(s,\sigma_1,\sigma_2)]] b_i^{1-\Omega}\notag\\&-Q\widetilde{R}\bar{x}-\lambda(s+ds,\sigma_1,\sigma_2)\big[x_i(s+ds,\sigma_1,\sigma_2)-x_i(s,\sigma_1,\sigma_2)\notag\\&-\mu_i^{\tau_1}[s,x_i(s,\sigma_1,\sigma_2),u_i[\a_{1i}^{\hat{\rho}}(s) \mathcal{A}_i(s,\sigma_1,\sigma_2)],u_{-i}^*[\a_{2,-i}^{\tilde{\rho}}(s) \mathcal{A}_{-i}(s,\sigma_1,\sigma_2)]] ds\notag\\&-\omega_{i,k}^{\tau_1}[s,x_i(s,\sigma_1,\sigma_2),u_i[\a_{1i}^{\hat{\rho}}(s) \mathcal{A}_i(s,\sigma_1,\sigma_2)],u_{-i}^*[\a_{2,-i}^{\tilde{\rho}}(s) \mathcal{A}_{-i}(s,\sigma_1,\sigma_2)]] dB^k(s,\sigma_1,\sigma_2)\big]\bigg] d\sigma^{3}\biggr\},
 \end{align}
 where $\widetilde{R}$ is Ricci curvature scalar, $\chi_i^\theta$ is $i^{th}$ firm's embedded $\theta$-coordinates with $N_{\theta\nu}$ as the background $11$-dimensional random metric with its quantum strategy volume $\int_{\Sigma} \sqrt{h} \exp(\gamma b_i) d\sigma^3$,  $\partial_{\tau_1} \chi_i^\theta=\partial \chi_i^\theta /\partial\sigma^{\tau_1}$, $\partial_{\tau_2} \chi_i^\nu=\partial \chi_i^\nu /\partial\sigma^{\tau_2}$,  $\partial_{\hat{\tau}_1} \chi_i^{\hat{\theta}_1}=\partial \chi_i^{\hat{\theta}_1} /\partial\sigma^{\hat{\tau}_1}$, $\partial_{\hat{\tau}_2} \chi_i^{\hat{\theta}_2}=\partial \chi_i^{\hat{\theta}_2} /\partial\sigma^{\hat{\tau}_2}$ and $\partial_{\hat{\tau}_3} \chi_i^{\hat{\theta}_3}=\partial \chi_i^{\hat{\theta}_3} /\partial\sigma^{\hat{\tau}_3}$, where $\theta,\nu,\hat{\theta}_k=\{4,..,11\}^3$ is the coordinate system in the transverse direction with $k=1,2,3$ and coupled with antisymmetric metric $H_{\hat{\theta}_1\hat{\theta}_2\hat{\theta}_3}=-\varepsilon_{\hat{\tau}_1\hat{\tau}_2\hat{\tau}_3} \mbox{det}^{-1}(g_{\tau_1\tau_2})$ with metric tensor $g_{\tau_1\tau_2}$ and $\varepsilon^{\hat{\tau}_1\hat{\tau}_2\hat{\tau}_3}$ is the contravariant Levi-Civita symbol corresponding to transverse dimensions \cite{kaku2012}.  In Equation (\ref{9.2}) $\Omega\in(0,1)$ represents the degree of freedom of $i^{th}$ firm's profit function $\pi_i$ and the stubbornness $b_i$ in the world volume and transverse directions respectively. Furthermore, first two terms in the parenthesis in Equation (\ref{9.2}) are a generalization of the Nambu-Goto like action function \cite{kaku2012}, third term is asymmetric part occurred due to transverse dimensions and we subtracted to get a finite measure of the action. 
 
 The transverse direction for the fourth dimension represents the parallel strategy spaces with the same coordinate systems or $\theta=\hat{\theta}_k$, the fifth dimension is firm $i$ is traveling through time in those parallel strategy spaces, the sixth dimension is parallel strategy spaces with different coordinate systems or $\theta\neq\hat{\theta}_k$, the seventh dimension is that firm is traveling through time in those parallel strategy spaces with $\theta\neq\hat{\theta}_k$, the eighth dimension is strategy spaces with different initial conditions and same economic laws, the ninth dimension is different initial conditions and different economic laws, the tenth dimension is same initial conditions with different economic laws, and the eleventh dimension is the case where all know economic laws fail.  
 
 In Equation (\ref{9.2}), $Q$ is the stubbornness measure on the spacetime and defined as $Q=2/\gamma+\gamma/2$ to have some conformal invariance \cite{gwynne2016}.  If the governing body of the firm $i$ becomes too stubborn, then $Q=2$ and on the other hand, a liberal governing body leads an enormous value of $Q$ (i.e $Q\ra\infty$). As perfect flexibility is unrealistic, we can ignore this part so that $Q\in[2,\infty)$. Equation (\ref{9.2}) represents a quantum Lagrangian-like action in strategy field where first two terms on the right hand side parenthesis correspond to the kinetic energy in world volume and transverse direction respectively, the term multiplied by a strategy constant $\lambda$ is the potential energy and $Q\widetilde{R}\bar {x}$ is the potential energy of the strategy spacetime where $\bar {x}$ is mean of all possible measures of market shares and  $\partial_{\tau_1}\chi_i^\theta\partial_{\tau_2}\chi_i^\nu N_{\theta\nu}$ is the pull-back of the background field $U$ \cite{kimura2016}.
 
We assume that the background strategy field $U$ is dynamic, and therefore conformal symmetry is a gauge symmetry similar to diffeomorphism invariance and Weyl invariance. For the brane metric $h_{\tau_1\tau_2}(\sigma)$ with parameter vector $\sigma$ (in our case $\sigma=\{s,\sigma_1,\sigma_2\}$) has the gauge transformation
$$h_{\tau_1\tau_2}(\sigma)\ra e^{2\omega(\sigma)}\ \frac{\partial\sigma^{\tau_3}}{\partial\sigma^{'\tau_1}}\frac{\partial\sigma^{\tau_4}}{\partial\sigma^{'\tau_2}}\ h_{\tau_3\tau_4}(\sigma).$$ 
In two dimensions these gauge symmetries allow us to put the metric into any form we like. This is true locally, but not globally. In other words, it remains true if the world volume has the topology of a cylinder or a sphere but not for the surface of higher dimensions. Furthermore, fixing $h$ locally fails to fix all gauge symmetries and also conformal symmetries. Therefore, we introduce the Faddeev-Popov ghost action function with $\mathfrak{F}_{0,t}^i(x)$ (\cite{faddeev1967}, \cite{t200}). Another reason to introduce the ghost field is to cancel unphysical gauge degrees of freedom. Suppose, there exist  Grassmann-valued anti-commuting fields (ghost fields) with tensors $e^{\tau_1\tau_2}$ and $c^{\tau_1}$. Then the Faddeev-Popov determinant of $h$ in the case of $i^{th}$ firm is 
\begin{align}\label{9.20}
\Delta_{FP}[h]=\int_{\Sigma} \exp\left\{\iota\epsilon\ \mathfrak{F}_{0,t}^{iG}(x)\right\} de dc,
\end{align}
 where the ghost action is defined as 
 \begin{align}\label{9.21}
 \mathfrak{F}_{0,t}^{iG}(e,c,h)=\frac{1}{2\pi\epsilon}\int_{\Sigma} \sqrt{h} e^{\tau_1\tau_2} \nabla^{\tau_1}c^{\tau_2} d\sigma^{3},
 \end{align}
 and $\nabla^{\tau_1}$ is the contravariant derivative with respect to $\tau_1$.
 
 Defining $\Delta s=\epsilon>0$ and for some positive constant $L_\epsilon$ with $\epsilon\ra 0$ transition function for a small time interval $[s,s+\epsilon]$ of firm $i$ can be written as 
 \begin{align}\label{9.3}
 \Psi_{s,s+\epsilon}^i(x)&=\frac{1}{L_\epsilon} \int_{ \widetilde{\Sigma}_s} \exp\bigg\{\iota\epsilon  \left[ \mathfrak{F}_{s,s+\epsilon}^i(x)+\mathfrak{F}_{0,t}^{iG}(e,c,h)\right]\bigg\}\times\notag\\& \Psi_s^i(x)\ de(s,\sigma_1,\sigma_2)\ dc(s,\sigma_1,\sigma_2)\ dx(s,\sigma_1,\sigma_2),
 \end{align}
 where $\widetilde{\Sigma}_s$ is the collection of all possible fields, and $\mathfrak{F}_{s,s+\epsilon}(x)$ is a local Liouville like action function in $[s,s+\epsilon]$ such that,
 \begin{align}\label{9.4}
 \Psi_{0,t}^i(x)&=\frac{1}{(L_\epsilon)^l} \int_{ (\widetilde{\Sigma}_s)^l} \exp\bigg\{\iota \epsilon  \sum_{j=1}^l\left[\mathfrak{F}_{s_j,s_{j+1}}^i(x)+\mathfrak{F}_{s_j,s_{j+1}}^{iG}(e,c,h)\right]\bigg\}\times\notag\\&  \Psi_s^i(x)\prod_{j=1}^l de(s_j,\sigma_1,\sigma_2) \prod_{j=1}^ldc(s_j,\sigma_1,\sigma_2)\prod_{j=1}^l dx(s_j,\sigma_1,\sigma_2),
 \end{align}
 where the time interval $[0,t]$ has been divided into $l> 1$ number of equal length subintervals.
 
 After using Fubini's theorem Equation (\ref{9.2}) becomes,
 \begin{align}
 &\mathfrak{F}_{0,t}^i(x)+\mathfrak{F}_{0,t}^{iG}(e,c,h)\notag\\&=\mbox{$\frac{1}{2}$} \E_s\biggr\{\int_\Sigma \sqrt{h} \bigg[3+h^{\tau_1\tau_2} \partial_{\tau_1} \chi_i^\theta \partial_{\tau_2} \chi_i^\nu\ N_{\theta\nu} \pi_i^\Omega[s,x_i(s,\sigma_1,\sigma_2),u_i[\a_{1i}^{\hat{\rho}}(s) \mathcal{A}_i(s,\sigma_1,\sigma_2)],\notag\\&u_{-i}^*[\a_{2,-i}^{\tilde{\rho}}(s) \mathcal{A}_{-i}(s,\sigma_1,\sigma_2)]]  b_i^\Omega-\mbox{$\frac{1}{3!\sqrt h}$} \varepsilon^{\hat{\tau_1}\hat{\tau_2}\hat{\tau_3}}\ \partial_{\hat{\tau}_1} \chi_i^{\hat{\theta}_1} \partial_{\hat{\tau}_2} \chi_i^{\hat{\theta}_2} \partial_{\hat{\tau}_3} \chi_i^{\hat{\theta}_3} H_{\hat{\theta}_1\hat{\theta}_2\hat{\theta}_3}\times\notag\\& \pi_i^{1-\Omega}[s,x_i(s,\sigma_1,\sigma_2),u_i[\a_{1i}^{\hat{\rho}}(s) \mathcal{A}_i(s,\sigma_1,\sigma_2)],u_{-i}^*[\a_{2,-i}^{\tilde{\rho}}(s) \mathcal{A}_{-i}(s,\sigma_1,\sigma_2)]]  b_i^{1-\Omega}+\frac{e^{\tau_1\tau_2}\nabla^{\tau_1}c^{\tau_2}}{\pi\epsilon}\notag\\&-Q\widetilde{R}\overline {x}-\lambda_i(s+ds,\sigma_1,\sigma_2)\big[x_i(s+ds,\sigma_1,\sigma_2)-x_i(s,\sigma_1,\sigma_2)\notag\\&-\mu_i^{\tau_1}[s,x_i(s,\sigma_1,\sigma_2),u_i[\a_{1i}^{\hat{\rho}}(s) \mathcal{A}_i(s,\sigma_1,\sigma_2)],u_{-i}^*[\a_{2,-i}^{\tilde{\rho}}(s) \mathcal{A}_{-i}(s,\sigma_1,\sigma_2)]] ds\notag\\&-\omega_{i,k}^{\tau_1}[s,x_i(s,\sigma_1,\sigma_2),u_i[\a_{1i}^{\hat{\rho}}(s) \mathcal{A}_i(s,\sigma_1,\sigma_2)],u_{-i}^*[\a_{2,-i}^{\tilde{\rho}}(s) \mathcal{A}_{-i}(s,\sigma_1,\sigma_2)]]\ dB^k(s,\sigma_1,\sigma_2)\big]\bigg] d\sigma^{3}\biggr\},\notag
 \end{align}
 and
 \begin{align}\label{9.6}
 &\mathfrak{F}_{s,s+\epsilon}^i(x)+\mathfrak{F}_{s,s+\epsilon}^{iG}(e,c,h)\notag\\&=\mbox{$\frac{1}{2}$}\E_s\biggr\{\int_\Sigma\ \sqrt{h} \bigg[3+h^{\tau_1\tau_2} \partial_{\tau_1} \chi_i^\theta \partial_{\tau_2} \chi_i^\nu\ N_{\theta\nu} \pi_i^{\Omega}[\nu,x_i(\nu,\sigma_1,\sigma_2),u_i[\a_{1i}^{\hat{\rho}}(\nu) \mathcal{A}_i(\nu,\sigma_1,\sigma_2)],\notag\\&u_{-i}^*[\a_{2,-i}^{\tilde{\rho}}(\nu) \mathcal{A}_{-i}(\nu,\sigma_1,\sigma_2)]]  b_i^\Omega-\mbox{$\frac{1}{3!\sqrt h}$} \varepsilon^{\hat{\tau_1}\hat{\tau_2}\hat{\tau_3}}\partial_{\hat{\tau}_1} \chi_i^{\hat{\theta}_1} \partial_{\hat{\tau}_2} \chi_i^{\hat{\theta}_2} \partial_{\hat{\tau}_3} \chi_i^{\hat{\theta}_3} H_{\hat{\theta}_1\hat{\theta}_2\hat{\theta}_3}\times\notag\\& \pi_i^{1-\Omega}[\nu,x_i(s,\sigma_1,\sigma_2),u_i[\a_{1i}^{\hat{\rho}}(\nu) \mathcal{A}_i(\nu,\sigma_1,\sigma_2)],u_{-i}^*[\a_{2,-i}^{\tilde{\rho}}(\nu) \mathcal{A}_{-i}(\nu,\sigma_1,\sigma_2)]]  b_i^{1-\Omega}+\frac{e^{\tau_1\tau_2} \nabla^{\tau_1}c^{\tau_2}}{\pi\epsilon}\notag\\&-Q\widetilde{R}\overline {x}-\lambda_i(\nu+d\nu,\sigma_1,\sigma_2)\big[x_i(\nu+d\nu,\sigma_1,\sigma_2)-x_i(\nu,\sigma_1,\sigma_2)\notag\\&-\mu_i^k[\nu,x_i(\nu,\sigma_1,\sigma_2),u_i[\a_{1i}^{\hat{\rho}}(\nu) \mathcal{A}_i(\nu,\sigma_1,\sigma_2)],u_{-i}^*[\a_{2,-i}^{\tilde{\rho}}(\nu) \mathcal{A}_{-i}(\nu,\sigma_1,\sigma_2)]] d\nu\notag\\&-\omega_{i,k}^{\tau_1}[\nu,x_i(\nu,\sigma_1,\sigma_2),u_i[\a_{1i}^{\hat{\rho}}(\nu) \mathcal{A}_i(\nu,\sigma_1,\sigma_2)],u_{-i}^*[\a_{2,-i}^{\tilde{\rho}}(\nu) \mathcal{A}_{-i}(\nu,\sigma_1,\sigma_2)]] dB^k(\nu,\sigma_1,\sigma_2)\big]\bigg] d\sigma^{3}\biggr\}.
 \end{align}
 
 \medskip
 
 In Equation (\ref{9.6}) $\Delta x_i(\nu,\sigma_1,\sigma_2)=x_i(\nu+d\nu,\sigma_1,\sigma_2)-x_i(\nu,\sigma_1,\sigma_2)$ is an It\^o's process. Therefore, by \cite{ito1962} there exists a $3$-dimensional tensor attached to $x_0^i$ for firm $i$ obtained by Levi-Civita parallel displacement, $_ig_{k_1,...,k_p}^{\tau_1}[\nu,x(\nu,\sigma_1,\sigma_2)]\in C^2([0,\infty)\times\mathbb{R}^{3})$ which satisfies Assumptions \ref{as1}-\ref{as3} and $_iY_{k_1,...,k_p}^{\tau_1}(\nu)=\ _ig_{k_1,...,k_p}^{\tau_1}[\nu,x(\nu,\sigma_1,\sigma_2)]$ where $_iY_{k_1,...,k_p}^{\tau_1}(\nu)$ is $i^{th}$ firm's ${\tau_1}^{th}$-dimensional It\^o's process.
 
 \medskip
 
After assuming 
\begin{align}
& _ig_{k_1,...,k_p}^{\tau_1}[\nu+\Delta\nu,x(\nu,\sigma_1,\sigma_2)+\Delta x(\nu,\sigma_1,\sigma_2) ]\notag\\&\approx\lambda_i(\nu+d\nu)\big[\Delta x_i(\nu,\sigma_1,\sigma_2)-\mu_i^{\tau_1}[\nu,x_i(\nu,\sigma_1,\sigma_2),u_i[\a_{1i}^{\hat{\rho}}(\nu) \mathcal{A}_i(\nu,\sigma_1,\sigma_2)],u_{-i}^*[\a_{2,-i}^{\tilde{\rho}}(\nu) \mathcal{A}_{-i}(\nu,\sigma_1,\sigma_2)]] d\nu\notag\\&-\omega_{i,k}^{\tau_1}[\nu,x_i(\nu,\sigma_1,\sigma_2),u_i[\a_{1i}^{\hat{\rho}}(\nu)\mathcal{A}_i(\nu,\sigma_1,\sigma_2)],u_{-i}^*[\a_{2,-i}^{\tilde{\rho}}(\nu) \mathcal{A}_{-i}(\nu,\sigma_1,\sigma_2)]] dB^k(\nu,\sigma_1,\sigma_2)\big],\notag
\end{align} 
Equation (\ref{9.6}) becomes
\begin{align}\label{9.7}
 &\mathfrak{F}_{s,s+\epsilon}^i(x)+\mathfrak{F}_{s,s+\epsilon}^{iG}(e,c,h)\notag\\&=\mbox{$\frac{1}{2}$} \E_s \biggr\{\int_\Sigma \sqrt{h} \bigg[3+h^{\tau_1\tau_2} \partial_{\tau_1} \chi_i^\theta\ \partial_{\tau_2} \chi_i^\nu\ N_{\theta\nu} \pi_i^\Omega[\nu,x_i(\nu,\sigma_1,\sigma_2),u_i[\a_{1i}^{\hat{\rho}}(\nu)\ \mathcal{A}_i(\nu,\sigma_1,\sigma_2)],\notag\\&u_{-i}^*[\a_{2,-i}^{\tilde{\rho}}(\nu) \mathcal{A}_{-i}(\nu,\sigma_1,\sigma_2)]] b_i^\Omega-\mbox{$\frac{1}{3!\sqrt h}$} \varepsilon^{\hat{\tau_1}\hat{\tau_2}\hat{\tau_3}} \partial_{\hat{\tau}_1} \chi_i^{\hat{\theta}_1} \partial_{\hat{\tau}_2} \chi_i^{\hat{\theta}_2} \partial_{\hat{\tau}_3} \chi_i^{\hat{\theta}_3} H_{\hat{\theta}_1\hat{\theta}_2\hat{\theta}_3}\times\notag\\& \pi_i^{1-\Omega}[\nu,x_i(\nu,\sigma_1,\sigma_2),u_i[\a_{1i}^{\hat{\rho}}(\nu) \mathcal{A}_i(\nu,\sigma_1,\sigma_2)],u_{-i}^*[\a_{2,-i}^{\tilde{\rho}}(\nu) \mathcal{A}_{-i}(\nu,\sigma_1,\sigma_2)]] \ b_i^{1-\Omega}+\frac{e^{\tau_1\tau_2} \nabla^{\tau_1}c^{\tau_2}}{\pi\epsilon}\notag\\&-Q\widetilde{R}\bar{x}-\ _ig_{k_1,...,k_p}^{\tau_1}[\nu+\Delta\nu,x(\nu,\sigma_1,\sigma_2)+\Delta x(\nu,\sigma_1,\sigma_2) ]\bigg] d\sigma^{3}\biggr\}.
\end{align}

\medskip

Following \cite{ito1950}, \cite{ito1962} and \cite{yeung2006}, the generalized It\^o's formula of a differentiable manifold yields
\begin{align}\label{9.8}
& _ig_{k_1,..,k_{v-1},k,k_{v+1},...,k_p}^{\tau_1}[\nu+\Delta\nu,x(\nu,\sigma_1,\sigma_2)+\Delta x(\nu,\sigma_1,\sigma_2) ]\notag\\&=\biggr[\ _ig_{k_1,..,k_{v-1},k,k_{v+1},...,k_p}^{\tau_1}+\  _ig_{k_1,..,k_{v-1},k,k_{v+1},...,k_p;\ \nu}^{\tau_1}\notag\\&+\mbox{$\frac{1}{2}$}\sum_{v=1}^{2}\ _ih^{\tau_1\tau_2}\bigg(\frac{\partial\ _i\Gamma_{\tau_2k_v}^k }{\partial x^{\tau_1}}+\ _i\Gamma_{\tau_2k_v}^{\tilde{k}}\ _i\Gamma_{\tau_1\tilde{k}}^k\bigg)\otimes\ _ig_{k_1,..,k_{v-1},k,k_{v+1},...,k_p}^{\tau_1}\notag\\&+\mbox{$\frac{1}{2}$}\sum_{v=1}^{2}\sum_{w=1}^{2}\ _ih^{\tau_1\tau_2}\ _i\Gamma_{\tau_1k_v}^k\ _i\Gamma_{\tau_2k_w}^{\tilde{k}}\ \otimes\ _ig_{k_1,..,k_{v-1},k,k_{v+1},...,k_{w-1},\tilde{k},k_{w+1},...,k_p}^{\tau_1}+o(1) \biggr] d\nu\notag\\&+\sum_{v=1}^{2}\ _i\Gamma_{jk_v}^k\ _ig_{k_1,..,k_{v-1},k,k_{v+1},...,k_p}^{\tau_1} dB^j,
\end{align}
with the initial condition $_ig_{k_1,..,k_{v-1},k,k_{v+1},...,k_p}^{\tau_1}(0,\sigma_{1i},\sigma_{2i})=\ _ig_{k_1,..,k_{v-1},k,k_{v+1},...,k_p}^{\tau_10}$, where\\ $_ig_{k_1,..,k_{v-1},k,k_{v+1},...,k_p; \nu}^{\tau_1}=\frac{\partial}{\partial x^\nu}\ _ig_{k_1,..,k_{v-1},k,k_{v+1},...,k_p}^{\tau_1}$ and $dB^{\tau_1}(\nu,\sigma_1,\sigma_2)\ dB^{\tau_2}(\nu,\sigma_1,\sigma_2)\sim h^{\tau_1\tau_2}\ d\nu$.

In Equation (\ref{9.7}), the determinant of the world volume (brane) $\sqrt{h}$ can be taken out of the integral as a constant because it is a fixed value of a field. Now for a small interval around $s$ with $\epsilon\ra 0$, using result in Equation (\ref{9.8}) into (\ref{9.7}), dividing through $\epsilon$ and taking conditional expectation we get,
\begin{align}\label{9.9}
&\mathfrak{F}_{s,s+\epsilon}^i(x)+\mathfrak{F}_{s,s+\epsilon}^{iG}(e,c,h)\notag\\&=\mbox{$\frac{\sqrt{h}}{2}$} \bigg[3+h^{\tau_1\tau_2} \partial_{\tau_1} \chi_i^\theta \partial_{\tau_2} \chi_i^\nu N_{\theta\nu} \pi_i^\Omega[\nu,x_i(s,\sigma_1,\sigma_2),u_i[\a_{1i}^{\hat{\rho}}(s) \mathcal{A}_i(s,\sigma_1,\sigma_2)],\notag\\&u_{-i}^*[\a_{2,-i}^{\tilde{\rho}}(s) \mathcal{A}_{-i}(s,\sigma_1,\sigma_2)]] b_i^\Omega-\mbox{$\frac{1}{3!\sqrt h}$} \varepsilon^{\hat{\tau_1}\hat{\tau_2}\hat{\tau_3}} \partial_{\hat{\tau}_1} \chi_i^{\hat{\theta}_1} \partial_{\hat{\tau}_2} \chi_i^{\hat{\theta}_2} \partial_{\hat{\tau}_3} \chi_i^{\hat{\theta}_3} H_{\hat{\theta}_1\hat{\theta}_2\hat{\theta}_3}\times\notag\\& \pi_i^{1-\Omega}[s,x_i(s,\sigma_1,\sigma_2),u_i[\a_{1i}^{\hat{\rho}}(s) \mathcal{A}_i(s,\sigma_1,\sigma_2)],u_{-i}^*[\a_{2,-i}^{\tilde{\rho}}(s) \mathcal{A}_{-i}(s,\sigma_1,\sigma_2)]]  b_i^{1-\Omega}\notag\\&+\frac{e^{\tau_1\tau_2} \nabla^{\tau_1}c^{\tau_2}}{\pi\epsilon}-Q\widetilde{R}\overline {x}-\ _ig_{k_1,..,k_{v-1},k,k_{v+1},...,k_p}^{\tau_1}-\  _ig_{k_1,..,k_{v-1},k,k_{v+1},...,k_p;\ \nu}^{\tau_1}\notag\\&-\mbox{$\frac{1}{2}$}\sum_{v=1}^{2}\ _ih^{\tau_1\tau_2}\bigg(\frac{\partial\ _i\Gamma_{\tau_2k_v}^k }{\partial x^{\tau_1}}+\ _i\Gamma_{\tau_2k_v}^{\tilde{k}}\ _i\Gamma_{\tau_1\tilde{k}}^k\bigg)\otimes\ _ig_{k_1,..,k_{v-1},k,k_{v+1},...,k_p}^{\tau_1}\notag\\&-\mbox{$\frac{1}{2}$}\sum_{v=1}^{2}\sum_{w=1}^{2}\ _ih^{\tau_1\tau_2}\ _i\Gamma_{\tau_1k_v}^k\ _i\Gamma_{\tau_2k_w}^{\tilde{k}}\ \otimes\ _ig_{k_1,..,k_{v-1},k,k_{v+1},...,k_{w-1},\tilde{k},k_{w+1},...,k_p}^{\tau_1}+o(1) \bigg],
\end{align}
where $\E_s[\Delta B^j(s,\sigma_1,\sigma_2)]=0$ and $\E_s[o(\epsilon)]/\epsilon=0$ as $\epsilon\ra 0$.

\medskip

Define the pull-back tensor of world volume as $\widehat{N}_{\tau_1\tau_2}=\partial_{\tau_1} \chi_i^\theta\ \partial_{\tau_2} \chi_i^\nu\ N_{\theta\nu}$ and towards the transverse direction as $\widehat{H}_{\hat{\tau}_1\hat{\tau}_2\hat{\tau}_3}= \partial_{\hat{\tau}_1} \chi_i^{\hat{\theta}_1} \partial_{\hat{\tau}_2} \chi_i^{\hat{\theta}_2} \partial_{\hat{\tau}_3} \chi_i^{\hat{\theta}_3}\ H_{\hat{\theta}_1\hat{\theta}_2\hat{\theta}_3}$. The expression in Equation (\ref{9.9}) then becomes,
\begin{align}\label{9.11}
&\mathfrak{F}_{s,s+\epsilon}^i(x)+\mathfrak{F}_{s,s+\epsilon}^{iG}(e,c,h)\notag\\&=\mbox{$\frac{\sqrt{h}}{2}$} \bigg[3+h^{\tau_1\tau_2} \widehat{N}_{\tau_1\tau_2}(\pi_i b_i)^\Omega-\mbox{${\frac{1}{3!\sqrt h}}$} \varepsilon^{\hat{\tau}_1\hat{\tau}_2\hat{\tau}_3} \widehat{H}_{\hat{\tau}_1\hat{\tau}_2\hat{\tau}_3} (\pi_i b_i)^{1-\Omega}+\frac{e^{\tau_1\tau_2} \nabla^{\tau_1}c^{\tau_2}}{\pi\epsilon}\notag\\&\hspace{.25cm}-Q\widetilde{R}\bar{x}-\ _ig_{k_1,..,k_{v-1},k,k_{v+1},...,k_p}^{\tau_1}-\  _ig_{k_1,..,k_{v-1},k,k_{v+1},...,k_p;\ \nu}^{\tau_1}\notag\\&-\mbox{$\frac{1}{2}$}\sum_{v=1}^{2}\ _ih^{\tau_1\tau_2}\bigg(\frac{\partial\ _i\Gamma_{\tau_2k_v}^k }{\partial x^{\tau_1}}+\ _i\Gamma_{\tau_2k_v}^{\tilde{k}}\ _i\Gamma_{\tau_1\tilde{k}}^k\bigg)\otimes\ _ig_{k_1,..,k_{v-1},k,k_{v+1},...,k_p}^{\tau_1}\notag\\&-\mbox{$\frac{1}{2}$}\sum_{v=1}^{2}\sum_{w=1}^{2}\ _ih^{\tau_1\tau_2}\ _i\Gamma_{\tau_1k_v}^k\ _i\Gamma_{\tau_2k_w}^{\tilde{k}}\ \otimes\ _ig_{k_1,..,k_{v-1},k,k_{v+1},...,k_{w-1},\tilde{k},k_{w+1},...,k_p}^{\tau_1}+o(1) \bigg].
\end{align}

\medskip

Consider the transition function at the initial time $s=0$ and state $x_0$ is $\Psi_0(x_0)$. For $\epsilon\ra 0$ the transition function for $[s,s+\epsilon]$ in Equation (\ref{9.3}) becomes,
\begin{align}\label{9.12}
\Psi_{s,s+\epsilon}^i(x)&=\frac{1}{L_\epsilon} \int_{ \widetilde{\Sigma}_s} \exp\biggr\{\iota\epsilon  \bigg[\mbox{$\frac{\sqrt{h}}{2}$} \bigg(3+h^{\tau_1\tau_2} \widehat{N}_{\tau_1\tau_2} (\pi_i\ b_i)^\Omega-\mbox{${\frac{1}{3!\sqrt h}}$} \varepsilon^{\hat{\tau}_1\hat{\tau}_2\hat{\tau}_3} \widehat{H}_{\hat{\tau}_1\hat{\tau}_2\hat{\tau}_3} (\pi_i b_i)^{1-\Omega}\notag\\&+\frac{e^{\tau_1\tau_2} \nabla^{\tau_1}c^{\tau_2}}{\pi\epsilon}-Q\widetilde{R}\bar {x}-\ _ig_{k_1,..,k_{v-1},k,k_{v+1},...,k_p}^{\tau_1}-\  _ig_{k_1,..,k_{v-1},k,k_{v+1},...,k_p;\ \nu}^{\tau_1}\notag\\&-\mbox{$\frac{1}{2}$}\sum_{v=1}^{2}\ _ih^{\tau_1\tau_2}\bigg(\frac{\partial\ _i\Gamma_{\tau_2k_v}^k }{\partial x^{\tau_1}}+\ _i\Gamma_{\tau_2k_v}^{\tilde{k}}\ _i\Gamma_{\tau_1\tilde{k}}^k\bigg)\otimes\ _ig_{k_1,..,k_{v-1},k,k_{v+1},...,k_p}^{\tau_1}\notag\\&-\mbox{$\frac{1}{2}$}\sum_{v=1}^{2}\sum_{w=1}^{2}\ _ih^{\tau_1\tau_2}\ _i\Gamma_{\tau_1k_v}^k\ _i\Gamma_{\tau_2k_w}^{\tilde{k}}\ \otimes\ _ig_{k_1,..,k_{v-1},k,k_{v+1},...,k_{w-1},\tilde{k},k_{w+1},...,k_p}^{\tau_1}\bigg)  \bigg]\biggr\}\times\notag\\& \Psi_s^i(x) de(s,\sigma_1,\sigma_2) dc(s,\sigma_1,\sigma_2) dx(s,\sigma_1,\sigma_2)+o\left(\epsilon^{\frac{1}{2}}\right).
\end{align}

For $\epsilon\ra0$, keeping $\sigma_1$ and $\sigma_2$ variables constant, and defining $s+\epsilon=\tau$, a first order Taylor series expansion with respect to time on the left hand side of the Equation (\ref{9.12}) gives us
\begin{align}\label{9.13}
&\Psi_{s}^{\tau,i}(x)+\epsilon \frac{\partial \Psi_{s}^{\tau,i}(x)}{\partial s}+o(\epsilon)\notag\\& =\frac{1}{L_\epsilon} \int_{ \widetilde{\Sigma}_s} \exp\biggr\{\iota\epsilon  \bigg[\mbox{$\frac{\sqrt{h}}{2}$} \bigg(3+h^{\tau_1\tau_2} \widehat{N}_{\tau_1\tau_2} (\pi_i\ b_i)^\Omega-\mbox{${\frac{1}{3!\sqrt h}}$} \varepsilon^{\hat{\tau}_1\hat{\tau}_2\hat{\tau}_3} \widehat{H}_{\hat{\tau}_1\hat{\tau}_2\hat{\tau}_3} (\pi_i b_i)^{1-\Omega}\notag\\&+\frac{e^{\tau_1\tau_2} \nabla^{\tau_1}c^{\tau_2}}{\pi\epsilon}-Q\widetilde{R}\bar {x}-\ _ig_{k_1,..,k_{v-1},k,k_{v+1},...,k_p}^{\tau_1}-\  _ig_{k_1,..,k_{v-1},k,k_{v+1},...,k_p;\ \nu}^{\tau_1}\notag\\&-\mbox{$\frac{1}{2}$}\sum_{v=1}^{2}\ _ih^{\tau_1\tau_2}\bigg(\frac{\partial\ _i\Gamma_{\tau_2k_v}^k }{\partial x^{\tau_1}}+\ _i\Gamma_{\tau_2k_v}^{\tilde{k}}\ _i\Gamma_{\tau_1\tilde{k}}^k\bigg)\otimes\ _ig_{k_1,..,k_{v-1},k,k_{v+1},...,k_p}^{\tau_1}\notag\\&-\mbox{$\frac{1}{2}$}\sum_{v=1}^{2}\sum_{w=1}^{2}\ _ih^{\tau_1\tau_2}\ _i\Gamma_{\tau_1k_v}^k\ _i\Gamma_{\tau_2k_w}^{\tilde{k}}\ \otimes\ _ig_{k_1,..,k_{v-1},k,k_{v+1},...,k_{w-1},\tilde{k},k_{w+1},...,k_p}^{\tau_1}\bigg)  \bigg]\biggr\}\times\notag\\& \Psi_s^i(x) de dc dx+o\left(\epsilon^{\frac{1}{2}}\right).
\end{align}

For some fixed $[s,\tau]$ there exists a positive finite number $\tilde\xi$ such that $x(s,\sigma_1,\sigma_2)=x(\tau,\sigma_1,\sigma_2)+\tilde\xi$. If $\tilde\xi$ is not around zero, then for some finite positive number $\eta$ and $x>0$  we assume that $|\tilde\xi|\leq \sqrt{\eta\epsilon/x}$ such that as $\epsilon\ra 0$, $\tilde\xi$ will be a very small number. A Taylor series expansion of  Equation (\ref{9.13}) gives,
\begin{align}\label{9.14}
&\Psi_{s}^{\tau,i}(x)+\epsilon \frac{\partial \Psi_{s}^{\tau,i}(x)}{\partial s}+o(\epsilon)\notag\\& =\frac{1}{L_\epsilon} \frac{\sqrt{h}}{2}\int_{ \widetilde{\Sigma}_s} \left[\Psi_s^{\tau,i}(x)+\Delta x^{\tau_1} \partial_{\tau_1} \Psi_s^{\tau,i}(x) + \mbox{$\frac{1}{2}$} \Delta x^{\tau_1} \Delta x^{\tau_2} \partial_{\tau_1}\partial_{\tau_2}\Psi_s^{\tau,i}(x)+ o\left(\epsilon^{\frac{1}{2}}\right)\right]\notag\\&\exp\biggr\{\iota\epsilon  \bigg[3+ h^{\tau_1\tau_2} \widehat{N}_{\tau_1\tau_2} (\pi_i\ b_i)^\Omega-\mbox{${\frac{1}{3!\sqrt h}}$} \varepsilon^{\hat{\tau}_1\hat{\tau}_2\hat{\tau}_3} \widehat{H}_{\hat{\tau}_1\hat{\tau}_2\hat{\tau}_3} (\pi_i b_i)^{1-\Omega}\notag\\&+\frac{e^{\tau_1\tau_2} \nabla^{\tau_1}c^{\tau_2}}{\pi\epsilon}-Q\widetilde{R}\bar {x}-\ _ig_{k_1,..,k_{v-1},k,k_{v+1},...,k_p}^{\tau_1}-\  _ig_{k_1,..,k_{v-1},k,k_{v+1},...,k_p;\ \nu}^{\tau_1}\notag\\&-\mbox{$\frac{1}{2}$}\sum_{v=1}^{2}\ _ih^{\tau_1\tau_2}\bigg(\frac{\partial\ _i\Gamma_{\tau_2k_v}^k }{\partial x^{\tau_1}}+\ _i\Gamma_{\tau_2k_v}^{\tilde{k}}\ _i\Gamma_{\tau_1\tilde{k}}^k\bigg)\otimes\ _ig_{k_1,..,k_{v-1},k,k_{v+1},...,k_p}^{\tau_1}\notag\\&-\mbox{$\frac{1}{2}$}\sum_{v=1}^{2}\sum_{w=1}^{2}\ _ih^{\tau_1\tau_2}\ _i\Gamma_{\tau_1k_v}^k\ _i\Gamma_{\tau_2k_w}^{\tilde{k}}\ \otimes\ _ig_{k_1,..,k_{v-1},k,k_{v+1},...,k_{w-1},\tilde{k},k_{w+1},...,k_p}^{\tau_1} \bigg]\biggr\}\times\notag\\& de dc d\xi+o\left(\epsilon^{\frac{1}{2}}\right),
\end{align}
where $\Delta x^{\tau_1}$ is time evolution of $x$ in the $\tau_1$ coordinate system, $\partial_{\tau_1} \Psi_s^{\tau,i}(x)=\frac{\partial \Psi_s^{\tau,i}(x)}{\partial \chi^{\tau_1}}$ and $\partial_{\tau_1}\partial_{\tau_2}\Psi_s^{\tau,i}(x)=\frac{\partial^2 \Psi_s^{\tau,i}(x)}{\partial \chi^{\tau_1}\partial \chi^{\tau_2}}$.  As we assume the strategy space is a metric affine space then, $\Delta x^{\lambda}=\Delta\xi^{\lambda}+\frac{1}{2}\Gamma_{\tau_1\tau_2}^\lambda\ \Delta\xi^{\tau_1}\Delta\xi^{\tau_2}-...$, where $\tau_1$ follows some spherical coordinate system, $\Delta\xi^{\tau_1}\equiv e_j^{\tau_1}\ \Delta q^j$ with $e_j^{\tau_1}(x)=\frac{\partial q^{\tau_1}}{\partial x^j}$ and $\Delta q$ is the change in $q$ in Cartesian coordinate system equivalent to any change of $x$ in polar coordinate system \cite{kleinert2009}. Here all affine connections are evaluated at the post point $x$ \cite{kleinert2009}.

\medskip

After using the value of $\Delta x^\lambda$ in the Equation (\ref{9.14}) we get, 
\begin{align}\label{9.15}
&\Psi_{s}^{\tau,i}(x)+\epsilon \frac{\partial \Psi_{s}^{\tau,i}(x)}{\partial s}+o(\epsilon)\notag\\& =\frac{1}{L_\epsilon} \frac{\sqrt{h}}{2}\int_{ \widetilde{\Sigma}_s} \left[\Psi_s^{\tau,i}(x)+\left(\Delta\xi^{\tau_1}+\mbox{$\frac{1}{2}$}\Gamma_{\tau_1\lambda}^{\tau_1} \Delta\xi^{\tau_2}\Delta\xi^{\lambda}\right) \partial_{\tau_1} \Psi_s^{\tau,i}(x)  + \mbox{$\frac{1}{2}$} \Delta \xi^{\tau_1} \Delta \xi^{\tau_2} \partial_{\tau_1}\partial_{\tau_2}\Psi_s^{\tau,i}(x)+ o\left(\epsilon^{\frac{1}{2}}\right)\right]\notag\\& \exp\biggr\{\iota\epsilon  \bigg[3+ h^{\tau_1\tau_2} \widehat{N}_{\tau_1\tau_2} (\pi_i b_i)^\Omega-\mbox{${\frac{1}{3!\sqrt h}}$} \varepsilon^{\hat{\tau}_1\hat{\tau}_2\hat{\tau}_3} \widehat{H}_{\hat{\tau}_1\hat{\tau}_2\hat{\tau}_3} (\pi_i b_i)^{1-\Omega}\notag\\&+\frac{e^{\tau_1\tau_2} \nabla^{\tau_1}c^{\tau_2}}{\pi\epsilon}-Q\widetilde{R}\bar {x}-\ _ig_{k_1,..,k_{v-1},k,k_{v+1},...,k_p}^{\tau_1}-\  _ig_{k_1,..,k_{v-1},k,k_{v+1},...,k_p;\ \nu}^{\tau_1}\notag\\&-\mbox{$\frac{1}{2}$}\sum_{v=1}^{2}\ _ih^{\tau_1\tau_2}\bigg(\frac{\partial\ _i\Gamma_{\tau_2k_v}^k }{\partial x^{\tau_1}}+\ _i\Gamma_{\tau_2k_v}^{\tilde{k}}\ _i\Gamma_{\tau_1\tilde{k}}^k\bigg)\otimes\ _ig_{k_1,..,k_{v-1},k,k_{v+1},...,k_p}^{\tau_1}\notag\\&-\mbox{$\frac{1}{2}$}\sum_{v=1}^{2}\sum_{w=1}^{2}\ _ih^{\tau_1\tau_2}\ _i\Gamma_{\tau_1k_v}^k\ _i\Gamma_{\tau_2k_w}^{\tilde{k}}\ \otimes\ _ig_{k_1,..,k_{v-1},k,k_{v+1},...,k_{w-1},\tilde{k},k_{w+1},...,k_p}^{\tau_1} \bigg]\biggr\} de dc d\xi+o\left(\epsilon^{\frac{1}{2}}\right).
\end{align}

\medskip

For a positive finite number $M\ra \infty$  the integral kernel of Equation (\ref{9.15}) is 
\begin{align}\label{9.16}
& K^\epsilon(x,\Delta x)\notag\\&=\mbox{$\frac{\sqrt h}{2\sqrt{2\pi\epsilon/M}}$} \exp\biggr\{\mbox{$\frac{\iota M}{2}$}  \bigg[3+ h^{\tau_1\tau_2} \widehat{N}_{\tau_1\tau_2} (\pi_i\ b_i)^\Omega-\mbox{${\frac{1}{3!\sqrt h}}$} \varepsilon^{\hat{\tau}_1\hat{\tau}_2\hat{\tau}_3} \widehat{H}_{\hat{\tau}_1\hat{\tau}_2\hat{\tau}_3} (\pi_i b_i)^{1-\Omega}\notag\\&+\frac{e^{\tau_1\tau_2} \nabla^{\tau_1}c^{\tau_2}}{\pi\epsilon}-Q\widetilde{R}\bar {x}-\ _ig_{k_1,..,k_{v-1},k,k_{v+1},...,k_p}^{\tau_1}-\  _ig_{k_1,..,k_{v-1},k,k_{v+1},...,k_p;\ \nu}^{\tau_1}\notag\\&-\mbox{$\frac{1}{2}$}\sum_{v=1}^{2}\ _ih^{\tau_1\tau_2}\bigg(\frac{\partial\ _i\Gamma_{\tau_2k_v}^k }{\partial x^{\tau_1}}+\ _i\Gamma_{\tau_2k_v}^{\tilde{k}}\ _i\Gamma_{\tau_1\tilde{k}}^k\bigg)\otimes\ _ig_{k_1,..,k_{v-1},k,k_{v+1},...,k_p}^{\tau_1}\notag\\&-\mbox{$\frac{1}{2}$}\sum_{v=1}^{2}\sum_{w=1}^{2}\ _ih^{\tau_1\tau_2}\ _i\Gamma_{\tau_1k_v}^k\ _i\Gamma_{\tau_2k_w}^{\tilde{k}}\ \otimes\ _ig_{k_1,..,k_{v-1},k,k_{v+1},...,k_{w-1},\tilde{k},k_{w+1},...,k_p}^{\tau_1} \bigg]\biggr\}.
\end{align}
Define the potential of the strategy field for firm $i$ is 
\begin{align}
V_i&=Q\widetilde{R}\bar {x}-\ _ig_{k_1,..,k_{v-1},k,k_{v+1},...,k_p}^{\tau_1}-\  _ig_{k_1,..,k_{v-1},k,k_{v+1},...,k_p;\ \nu}^{\tau_1}\notag\\&-\mbox{$\frac{1}{2}$}\sum_{v=1}^{2}\ _ih^{\tau_1\tau_2}\bigg(\frac{\partial\ _i\Gamma_{\tau_2k_v}^k }{\partial x^{\tau_1}}+\ _i\Gamma_{\tau_2k_v}^{\tilde{k}}\ _i\Gamma_{\tau_1\tilde{k}}^k\bigg)\otimes\ _ig_{k_1,..,k_{v-1},k,k_{v+1},...,k_p}^{\tau_1}\notag\\&-\mbox{$\frac{1}{2}$}\sum_{v=1}^{2}\sum_{w=1}^{2}\ _ih^{\tau_1\tau_2}\ _i\Gamma_{\tau_1k_v}^k\ _i\Gamma_{\tau_2k_w}^{\tilde{k}}\ \otimes\ _ig_{k_1,..,k_{v-1},k,k_{v+1},...,k_{w-1},\tilde{k},k_{w+1},...,k_p}^{\tau_1}\notag
\end{align}
such that,
\begin{align}\label{9.17}
& K^\epsilon(x,\Delta x)\notag\\&=\mbox{$\frac{\sqrt h}{2\sqrt{2\pi\epsilon/M}}$} \exp\biggr\{\mbox{$\frac{\iota M}{2}$}  \bigg[3+ h^{\tau_1\tau_2} \widehat{N}_{\tau_1\tau_2} (\pi_i b_i)^\Omega-\mbox{${\frac{1}{3!\sqrt h}}$} \varepsilon^{\hat{\tau}_1\hat{\tau}_2\hat{\tau}_3} \widehat{H}_{\hat{\tau}_1\hat{\tau}_2\hat{\tau}_3} (\pi_i b_i)^{1-\Omega}+\frac{e^{\tau_1\tau_2} \nabla^{\tau_1}c^{\tau_2}}{\pi\epsilon}-V_i \bigg]\biggr\}.
\end{align}
Hence, using a Laplace approximation, Equation (\ref{9.16}) becomes,
\begin{align}\label{9.18}
&\Psi_{s}^{\tau,i}(x)+\epsilon \frac{\partial \Psi_{s}^{\tau,i}(x)}{\partial s}+o(\epsilon)\notag\\& =\frac{1}{L_\epsilon}  \int_{ \widetilde{\Sigma}_s} K_0^\epsilon(x,\Delta \xi) \biggr[\Psi_s^{\tau,i}(x)+\left(\Delta\xi^{\tau_1}+\mbox{$\frac{1}{2}$}\ _i\Gamma_{\tau_2\lambda}^{\tau_1}\ \Delta\xi^{\tau_2}\Delta\xi^{\lambda}\right) \partial_{\tau_1} \Psi_s^{\tau,i}(x)\notag\\&+ \mbox{$\frac{1}{2}$} \Delta \xi^{\tau_1} \Delta \xi^{\tau_2} \partial_{\tau_1}\partial_{\tau_2}\Psi_s^{\tau,i}(x)+ o\left(\epsilon^{\frac{1}{2}}\right)\biggr],
\end{align}
with the zeroth-order kernel
\begin{align}\label{9.19}
 K_0^\epsilon(x,\Delta \xi)&=\mbox{$\frac{\sqrt h}{2L_\epsilon\sqrt{\frac{2\pi\iota\epsilon}{M\big|\hat{h}_{\tau_1\tau_2}\partial_{\tau_1}\partial_{\tau_2}F_0^i\big|}}}$} \exp\biggr\{\mbox{$\frac{\iota M}{2}$}  \bigg[3+ h^{\tau_1\tau_2} \widehat{N}_{\tau_1\tau_2} (\pi_i b_i)^\Omega\notag\\&-\mbox{${\frac{1}{3!\sqrt h}}$} \varepsilon^{\hat{\tau}_1\hat{\tau}_2\hat{\tau}_3} \widehat{H}_{\hat{\tau}_1\hat{\tau}_2\hat{\tau}_3} (\pi_i b_i)^{1-\Omega}+\frac{e^{\tau_1\tau_2} \nabla^{\tau_1}c^{\tau_2}}{\pi\epsilon}-V_i \bigg] \Delta\xi^{\tau_1}\Delta\xi^{\tau_2}\biggr\},
\end{align}
of unit normalization $$\int_{ \widetilde{\Sigma}_s}\ K_0^\epsilon(x,\Delta\xi) de dc d\xi=1,$$ as $M\ra\infty$, where $\partial_{\tau_1}\partial_{\tau_2}F_0^i$ is $i^{th}$ firm's Hessian of a strictly monotonically increasing function $F^i$ with the initial value $F_0^i$. Hence 
\begin{align}
F^i\ \hat{h}_{\tau_1\tau_2}&= 3+h^{\tau_1\tau_2} \widehat{N}_{\tau_1\tau_2} (\pi_i b_i)^\Omega-\mbox{${\frac{1}{3!\sqrt h}}$} \varepsilon^{\hat{\tau}_1\hat{\tau}_2\hat{\tau}_3} \widehat{H}_{\hat{\tau}_1\hat{\tau}_2\hat{\tau}_3} (\pi_i b_i)^{1-\Omega}+\frac{e^{\tau_1\tau_2} \nabla^{\tau_1}c^{\tau_2}}{\pi\epsilon}-V_i, 
\end{align}
and $\hat{h}^{\tau_1\tau_2}$ is a  contravariant metric of the $11$-dimensional background strategy super-field. After assuming $L_\epsilon=\sqrt{h\hat{h}}/2>0$, where $\hat h$ is the determinant of the background super-field, Equation (\ref{9.19}) becomes,
\begin{align}\label{9.190}
K_0^\epsilon(x,\Delta \xi)&=\mbox{$\frac{\sqrt{\hat h} }{\sqrt{\frac{2\pi\iota\epsilon}{M\ \big|\hat{h}_{\tau_1\tau_2}\partial_{\tau_1}\partial_{\tau_2}F_0^i\big|}}}$} \exp\biggr\{\mbox{$\frac{-\iota M}{2}$} F_0^i \hat{h}_{\tau_1\tau_2} \biggr\} \exp\biggr\{\mbox{$\frac{\iota M}{2}$} F^i\hat{h}_{\tau_1\tau_2} \biggr\}
\end{align}

Using the kernel in Equation (\ref{9.190}) we get a two-point correlation function
\begin{align}\label{9.22}
\langle\Delta\xi^{\tau_1} \Delta\xi^{\tau_2}\rangle=\int_{ \widetilde{\Sigma}_s} \Delta\xi^{\tau_1} \Delta\xi^{\tau_2} K_0^\epsilon(x,\Delta\xi) de dc d\xi=\frac{\iota\epsilon}{M} F_0^i  \hat{h}^{\tau_1\tau_2}.
\end{align}
Equations (\ref{9.22}) and (\ref{9.18}) yield,
\begin{align}\label{9.23}
&\Psi_{s}^{\tau,i}(x)+\epsilon \frac{\partial \Psi_{s}^{\tau,i}(x)}{\partial s}+o(\epsilon)\notag\\& =\Psi_s^{\tau,i}(x)+\mbox{$\frac{\iota\epsilon}{2M}$} F_0^i \left[ \hat{h}^{\tau_1\tau_2} \partial_{\tau_1} \partial_{\tau_2}\Psi_s^{\tau,i}(x)-\ _i\Gamma_{\tau_2}^{\tau_2\tau_1}\partial_{\tau_1}\Psi_s^{\tau,i}(x)\right]+ o\left(\epsilon\right).
\end{align}
The parenthesis term in Equation (\ref{9.23}) is proportional to covariant Laplacian of $i^{th}$ firm's $\Psi_s^i(x)$ \cite{kleinert2009}:
\begin{align}
D_{\tau_1}D^{\tau_1}\Psi_s^{\tau,i}\equiv\hat{h}^{\tau_1\tau_2}D_{\tau_1}D_{\tau_2}\Psi_s^{\tau,i}=\hat{h}^{\tau_1\tau_2}D_{\tau_1}\partial_{\tau_2}\Psi_s^{\tau,i}=\hat{h}^{\tau_1\tau_2} \partial_{\tau_1} \partial_{\tau_2}\Psi_s^{\tau,i}-\ _i\Gamma_{\tau_2}^{\tau_2\tau_1}\partial_{\tau_1}\Psi_s^{\tau,i}.\notag
\end{align}
Therefore, Shr\"odinger-like Equation in this case is
\begin{align}\label{9.24}
\partial_s \Psi_{s}^{\tau,i}(x)&=\mbox{$\frac{\iota}{2M}$} F_0^i \left[ \hat{h}^{\tau_1\tau_2} \partial_{\tau_1} \partial_{\tau_2}\Psi_s^{\tau,i}(x)-\ _i\Gamma_{\tau_2}^{\tau_2\tau_1}\partial_{\tau_1}\Psi_s^{\tau,i}(x)\right]=\mbox{$\frac{\iota}{2M}$}\ F_0^i\ D_{\tau_1}D^{\tau_1}\Psi_s^{\tau,i},
\end{align}
where $\partial_s \Psi_{s}^{\tau,i}(x)=\partial \Psi_{s}^{\tau,i}(x)/\partial s$. After differentiating Equation (\ref{9.24}) partially with respect to $\hat{\rho}$ and solving for it gives us optimal degree of cooperation at time $s$.
\end{proof}
 
\section{Discussion}
In this paper we show if a firm has a positive measure of market share then it creates a curvature around its strategy in the strategy spacetime which motivates it further to create more curvature and makes the new firms fall into it. Therefore, rationality of the firm is the curvature of the strategy spacetime. Secondly, we show if a firm sells the right of its product multiple times to the same consumer then its profit reduction operator increased and the firm shuts down. Finally, we show if the conformal, curved strategy space has 2-brane action function then we are able to find out the optimal level of semi-cooperation of that firm. In order to do so we assume, the property of strategy spacetime is similar to a $11$-dimensional Clifford torus and we are able to compactify the transverse directions to make it as a $3$-dimensional space. Throughout this paper we assume the background manifold is homeomorphic and diffeomorphic but in future we relax the assumption of diffeomorphism to discuss about optimal cooperation under exotic $7$-sphere.
\bibliographystyle{abbrv}
\bibliography{bib}
\end{document}